\newcommand{\R}{\numberset{R}}
\newcommand{\lnorm}{\left\Arrowvert}
\newcommand{\rnorm}{\right\Arrowvert}
\theoremstyle{plain}
\newtheorem{thm}{Theorem}[section]
\newtheorem{proposition}[thm]{Proposition}
\newtheorem{lemma}[thm]{Lemma}
\newtheorem{corollary}[thm]{Corollary}
\theoremstyle{definition}
\newtheorem{definition}[thm]{Definition}
\newtheorem{remark}[thm]{Remark}
\def\Xint#1{\mathchoice
	{\XXint\displaystyle\textstyle{#1}}%
	{\XXint\textstyle\scriptstyle{#1}}%
	{\XXint\scriptstyle\scriptscriptstyle{#1}}%
	{\XXint\scriptscriptstyle\scriptscriptstyle{#1}}%
	\!\int}
\def\XXint#1#2#3{{\setbox0=\hbox{$#1{#2#3}{\int}$}
		\vcenter{\hbox{$#2#3$}}\kern-.5\wd0}}
\def\Mint{\Xint -}
\def\eps{\varepsilon}
\def\eps{\varepsilon}
\def\div{{\rm div}}
\def\R{\mathbb{R}}
\def\e{\varepsilon}
\def\supp{\mathrm{supp}}
\def\loc{\mathrm{loc}}
\numberwithin{equation}{section} \makeatletter
\renewcommand{\p@enumi}{\thesection.}
\title{\textbf{Regularity results for solutions to a class of non-autonomous obstacle problems with sub-quadratic growth conditions}}
\author{Andrea Gentile, Raffaella Giova}
\begin{document}
	
	\maketitle

	\begin{abstract}
		\noindent We establish some higher differentiability results for solution to non-autonomous obstacle problems of the form
		
		\begin{equation*}
			\min \left\{\int_{\Omega}f\left(x, Dv(x)\right)dx\,:\, v\in
			\mathcal{K}_\psi(\Omega)\right\},
		\end{equation*}
		
		\noindent where the function $f$ satisfies $p-$growth conditions with respect to the gradient variable, for $1<p<2$, and $\mathcal{K}_\psi(\Omega)$ is the class of admissible functions.\\
		Here we show that, if the obstacle $\psi$ is bounded, then a Sobolev regularity assumption on the gradient of the obstacle $\psi$ transfers to the gradient of the solution, provided the partial map $x\mapsto D_\xi f(x,\xi)$ belongs to a Sobolev space, $W^{1, p+2}$.\\
		The novelty here is that we deal with subquadratic growth conditions with respect to the gradient variable, i.e. $f(x, \xi)\approx a(x)|\xi|^p$ with $1<p<2,$ and where the map $a$ belongs to a Sobolev space.
	\end{abstract}
	\noindent {\footnotesize{{\bf AMS Classifications.} 35J87;
			49J40; 47J20.}}
	
	\bigskip
	\noindent {\footnotesize{{\bf Key words and phrases.} Obstacle problems; higher differentiability; Sobolev coefficients.}}
	\bigskip
	\section{Introduction}
	
	We are interested in the regularity properties of solutions to problems of the form
	
	\begin{equation}\label{functionalobstacle}
		\min \Set{\int_{\Omega}f\left(x, Dv(x)\right)dx\,:\, v\in
		\mathcal{K}_\psi(\Omega)},
	\end{equation}
	where $\Omega\subset\R^n$ is a bounded open set, $n>2$, $f:\Omega\times\R^n\to\R$ is a Carath\'{e}odory map, such that $\xi\mapsto f(x, \xi)$ is of class $C^2(\R^n)$ for a. e. $x\in\Omega$, $\psi: \Omega
	\mapsto [-\infty, +\infty)$ belonging to the Sobolev class $ W^{1,p}_{\loc}\left(\Omega\right)$ is the \emph{obstacle}, and
	
	$$\mathcal{K}_\psi(\Omega)=\Set{v\in u_0+W^{1, p}_0(\Omega, \R): v\ge\psi \text{ a.e. in }\Omega}$$
	
	is the class of the admissible functions, with $u_0\in W^{1, p}(\Omega)$ a fixed boundary datum.
	
	\bigskip

	Let us observe that $u\in W^{1,p}_{\loc}\left(\Omega\right)$ is a
	solution to the obstacle problem \eqref{functionalobstacle} in
	$\mathcal{K}_\psi(\Omega)$ if and only if $u \in
	\mathcal{K}_\psi(\Omega)$ and $u$ is a solution to the variational
	inequality
	
	\begin{equation}\label{variationalinequality}
		\int_{\Omega}\left<A(x, Du(x)), D(\varphi(x)-u(x))\right>dx\ge0\qquad\forall
		\varphi\in \mathcal{K}_\psi(\Omega),
	\end{equation}
	where the operator $A: \Omega\times\R^n\to\R^n$ is defined
	as follows
	
	\begin{equation*}
		A_i(x, \xi)=D_{\xi_i}f(x, \xi)\qquad\forall i=1,...,n.
	\end{equation*}
	
	We assume that $A$ is a $p$-harmonic type operator, that is it satisfies the following $p$-ellipticity and $p$-growth conditions with respect to
	the $\xi$-variable. There exist positive constants $\nu, L, \ell$
	and an exponent $1<p\le2$ and a parameter $0\le \mu\le 1$ such that
	
	\begin{equation}\label{obstacleA3}
		\left|A(x, \xi)\right|\le\ell\left(\mu^2+|\xi|^2\right)^\frac{p-1}{2},
	\end{equation}
	
	\begin{equation}\label{obstacleA1}
		\left<A(x, \xi)-A(x, \eta), \xi-\eta\right>\ge\nu|\xi-\eta|^2\left(\mu^2+|\xi|^2+|\eta|^2\right)^\frac{p-2}{2},
	\end{equation}
		
	\begin{equation}\label{obstacleA2}
		\left|A(x, \xi)-A(x, \eta)\right|\le L|\xi-\eta|\left(\mu^2+|\xi|^2+|\eta|^2\right)^\frac{p-2}{2},
	\end{equation}
	
	for all $\xi, \eta\in\R^n$ and for almost every $x\in\Omega.$\\
	
	First, we show that an higher differentiability property of integer order of the gradient of the obstacle tranfers to the solution of problem \eqref{functionalobstacle}, provided the partial map $x\mapsto D_\xi f(x, \xi)$ belongs to a suitable Sobolev class, with no loss in the order of differentiation.\\
	More precisely we assume that the map $x\mapsto A\left(x,\xi\right)$ belongs to $W^{1,p+2}_{\loc}\left(\Omega	\right)$ for every $\xi\in\R^n$ or, equivalently, that there exists a non-negative function $g\in L^{p+2}_{\loc}\left(\Omega\right)$ such that
	
	\begin{equation}\label{x-dependenceH}
			\left|A\left(x, \xi\right)-A\left(y, \xi\right)\right|\le
			\left(g(x)+g(y)\right)\left|x-y\right|\left(\mu^2+\left|\xi\right|^2\right)^\frac{p-1}{2}
		\end{equation}
		for a. e. $x, y\in\Omega$ and for every $\xi\in\R^n$, and
	
	\begin{equation}\label{x-dependence}
		\left|D_xA\left(x, \xi\right)\right|\le
		g(x)\left(\mu^2+\left|\xi\right|^2\right)^\frac{p-1}{2},
	\end{equation}
	(see \cite{H}).\\
	
	Note that, since $f$, as a function of the $\xi$ variable, is of class $C^2$, then the operator $A$ is of class $C^1$ with respect to $\xi$, and \eqref{obstacleA2} implies
	
	\begin{equation}\label{A2bis}
		\left|D_\xi A(x, \xi)\right|\le c\left(\mu^2+|\xi|^2\right)^\frac{p-2}{2},
	\end{equation}
	
	for all $\xi\in\R^n\setminus\Set{0}$ and for a.e. $x\in\Omega.$\\
	
Higher differentiability results for solutions to homogeneous obstacle problems of the type \eqref{functionalobstacle} under a Sobolev assumption on the partial map $x\mapsto A(x, \xi)$ have been obtained when the energy density satisfies standard $p$-growth conditions, both in case $p\ge 2$ (\cite{EP}) and in case $1<p<2$ (\cite{Gentile3}). The case of non-standard growth conditions has been faced, for example, in \cite{Gavioli2, Gavioli}, for what concerns $(p,q)$-growth and in case of variable exponents in \cite{FG}. The case of non-homogeneous obstacle problems is faced in \cite{MaZ}, where the energy density satisfies $p$-growth conditions, and in \cite{CEP}, where the energy density satisfies $(p,q)$-growth conditions. All previously quoted higher differentiability results have been obtained assuming that, with respect to the $x$-variable, the map $A$ belongs to a Sobolev space $W^{1,r}$ with $r\ge n$.\\ 
However, taking into account the result obtained for unconstrained problem in \cite{CGGP1, GiovaPassarelli}, proving that if we deal with
bounded solutions to, the higher differentiability holds true under weaker assumptions on the partial map $x\mapsto A(x, \xi)$ with respect to $ W^{1,n}$, and the result obtained in \cite{CEP} proving that a
local bound assumption on the obstacle $\psi$ implies a local bound for the solutions to the obstacle problem \eqref{functionalobstacle}, in \cite{CGG, GGT} have been proven that, if the obstacle is locally bounded, higher differentiability results for the solutions of \eqref{functionalobstacle} persist assuming that the partial map $x\mapsto A(x, \xi)$ belongs to a Sobolev class that is not related to the dimension $n$ but to the growth exponent $p$ of the functional in case of standard growth and to the ellipticity and the growth exponents $p$ and $q$ of the functional in case of non-standard growth.\\ 
We want to observe that all these regularity results obtained to the solutions of obstacle problems have been inspired by the results obtained for the solutions of systems or for the minimizers of functionals in the case of unconstrained problems (see \cite{ CUMR, EleMarMas2, EleMarMas, EleMarMas3, Gentile2, Gentile1, Giova1, Giova2, APdN1, APdN2} ) since the regularity of the solutions to the obstacle problem
\eqref{functionalobstacle} is strictly connected to the analysis of the regularity of the solutions to a partial differential equation of the form
$$
\div A(x,Du) = \div A(x,D\psi),
$$
(see Theorem \ref{Fuchs} in Section \ref{Preliminaries} below).\\
For higher fractional differentiability results, we refer to \cite {EP, GRIMALDI2021103377, GrimaldiIpocoana, MaZ, ZhangZheng}  for obstacle problems, and to \cite{AMBROSIO2022125636, BCO, BCGOP, BDW, CGP, Giova3} for the case of unconstrained problems.\\
For other results dealing regularity of solutions to elliptic problems, both in case of unconstrained and constrained case, we refer to \cite{ Beck-Mingione, de2021lipschitz, de2020regularity}, where Lipschitz regularity results are proved, even under non-standard growth and ellipticity conditions.

As usual in the study of regularity results for solutions to elliptic problems, we shall use a function of the gradient defined as
	$$
	V_p\left(\xi\right)=\left(\mu^2+
	\left|\xi\right|^2\right)^\frac{p-2}{4}\xi, \qquad \mbox{ for any }\xi\in\R^n.
	$$
The main result we prove in this paper is the following.
	
\begin{thm}\label{thm1GG}
	Let $u\in W^{1,p}_{\loc}\left(\Omega\right)$ be a solution to the obstacle problem \eqref{functionalobstacle} under assumptions \eqref{obstacleA3}--\eqref{obstacleA2} and let us assume that there exists a function $g\in L^{p+2}_\loc\left(\Omega\right)$ such that \eqref{x-dependenceH} and \eqref{x-dependence} hold, for $1<p<2$.\\
	Then the following implication holds:
	
	\begin{equation*}\label{implication1GG}
		\psi\in L^\infty_{\loc}\left(\Omega\right) \mbox{ and } V_p\left(D\psi\right)\in W^{1,2}_{\loc}\left(\Omega\right)\implies V_p\left(Du\right)\in W^{1,2}_{\loc}\left(\Omega\right).
	\end{equation*}
	
	Moreover, for any ball $B_{8R}\Subset\Omega$, the following estimate holds
	
	\begin{eqnarray}\label{estimate1GG}
		&&\int_{B_{\frac{R}{2}}}\left|DV_p\left(Du(x)\right)\right|^2dx\cr\cr
		&\le&\frac{c\left(\left\Arrowvert \psi\right\Arrowvert_{L^\infty\left(B_{8R}\right)}^2+\left\Arrowvert u\right\Arrowvert_{L^{p^*}\left(B_{8R}\right)}^2\right)^{\sigma_1}}{R^2}\cr\cr
		&&\cdot\left[\int_{B_{4R}}\left(\mu^2+\left|Du(x)\right|^2\right)^\frac{p}{2}dx+\int_{B_{4R}}g^{p+2}(x)dx\right.\cr\cr
		&&\left.+\int_{B_{4R}} \left|DV_p\left(D\psi(x)\right)\right|^2dx+\int_{B_{4R}}\left(\mu^2+\left|D\psi(x)\right|^2\right)^\frac{p}{2}dx\right]^{\sigma_2},
	\end{eqnarray}
	
	where $c>0$ depends on $n, p, \nu, L$ and $\ell$ and $\sigma_1, \sigma_2>0$ depend on $n$ and $p$.
\end{thm}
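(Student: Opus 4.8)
The plan is to combine an approximation procedure with the difference–quotient method, exploiting the boundedness of $u$ — which follows from $\psi\in L^\infty_{\loc}(\Omega)$ together with the local boundedness result for obstacle problems recalled in Section~\ref{Preliminaries} (see \cite{CEP}) — to make the sub-quadratic ellipticity cooperate with the low integrability of the coefficient. First I would reduce to an a priori estimate: regularize $A$ by a family $A_\delta$ (mollifying the $x$–dependence and adding a small non-degenerate perturbation, so that $\mu$ is replaced by a positive parameter) and $\psi$ by $\psi_\delta\in C^\infty$ with $\psi_\delta\to\psi$ in $W^{1,p}_{\loc}$, $\|\psi_\delta\|_{L^\infty}\le\|\psi\|_{L^\infty}$, $V_p(D\psi_\delta)\to V_p(D\psi)$ in $W^{1,2}_{\loc}$, so that the solutions $u_\delta$ of the regularized obstacle problems are regular enough to justify all the computations, converge to $u$ in $W^{1,p}_{\loc}$, and inherit a uniform local $L^\infty$–bound controlled by $\|\psi\|_{L^\infty(B_{8R})}$ and $\|u\|_{L^{p^*}(B_{8R})}$. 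It then suffices to prove \eqref{estimate1GG} for $u_\delta$ with $\delta$–independent constants and pass to the limit, lower semicontinuity giving $V_p(Du)\in W^{1,2}_{\loc}$. Below I drop $\delta$.

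\textbf{Admissible test function and Caccioppoli inequality.} Fix $B_{8R}\Subset\Omega$ and radii $\tfrac R2\le r<\rho\le 4R$, and take $\eta\in C^\infty_c(B_\rho)$ with $\eta\equiv 1$ on $B_r$, $0\le\eta\le\tfrac1{\sqrt2}$, $|D\eta|\le c/(\rho-r)$. For small $h$ and a coordinate direction $e_s$ set $\tau_h v(x)=v(x+he_s)-v(x)$. The function $\varphi=u-\tau_{-h}\!\left(\eta^2\,\tau_h(u-\psi)\right)$ lies in $\mathcal{K}_\psi(\Omega)$: since $u-\psi\ge0$ and $2\eta^2\le 1$ one checks $\varphi\ge\psi$, while $\varphi-u_0\in W^{1,p}_0(\Omega)$. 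Inserting $\varphi$ into \eqref{variationalinequality} — which, by Theorem~\ref{Fuchs}, is how the problem encodes the comparison with $\div A(x,Du)=\div A(x,D\psi)$ — and performing the discrete integration by parts yields
\begin{align*}
\int_\Omega\left<\tau_h A(x,Du),D\!\left(\eta^2\tau_h u\right)\right>dx\le\int_\Omega\left<\tau_h A(x,Du),D\!\left(\eta^2\tau_h\psi\right)\right>dx.
\end{align*}
Splitting $\tau_h A(x,Du)$ into the ``frozen coefficient'' increment $A(x+he_s,Du(x+he_s))-A(x+he_s,Du(x))$ and the ``$x$–increment'' $A(x+he_s,Du(x))-A(x,Du(x))$, using the monotonicity \eqref{obstacleA1} with the standard two-sided bound $|\tau_h V_p(Du)|^2\approx(\mu^2+|Du(x)|^2+|Du(x+he_s)|^2)^{\frac{p-2}{2}}|\tau_h Du|^2$ for the first bracket, the growth \eqref{obstacleA2}, \eqref{A2bis} for the terms containing $D\eta$, and \eqref{x-dependenceH} for the second bracket, then dividing by $|h|^2$ and reabsorbing a small multiple of $\int\eta^2|\tau_h V_p(Du)|^2/|h|^2$, I would obtain
\begin{align*}
\int_{B_r}\frac{|\tau_h V_p(Du)|^2}{|h|^2}\,dx\le\; & \frac{c}{(\rho-r)^2}\int_{B_\rho}\!\left(\mu^2+|Du|^2\right)^{\frac p2}dx+c\int_{B_\rho}|DV_p(D\psi)|^2\,dx\\
&+c\int_{B_\rho}\!\left(\mu^2+|D\psi|^2\right)^{\frac p2}dx+\mathrm{I}_{\mathrm{cut}}+\mathrm{I}_{x},
\end{align*}
where $\mathrm{I}_{\mathrm{cut}}$ gathers the contributions of $D\eta$ paired with $\tau_h u$ and $\tau_h\psi$, and $\mathrm{I}_{x}$ is the contribution of the $x$–increment of $A$.

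\textbf{Controlling $\mathrm{I}_{\mathrm{cut}}$ and $\mathrm{I}_x$ — the sub-quadratic core.} This is where $1<p<2$ and $g\in L^{p+2}_{\loc}$ make the argument delicate. For $\mathrm{I}_{\mathrm{cut}}$, using $|\tau_h u(x)|\le|h|\int_0^1|Du(x+the_s)|\,dt$, the boundedness of $u$ and the standard sub-quadratic difference–quotient estimates (as in \cite{Gentile3}) one bounds it by a multiple of $(\rho-r)^{-2}(\|u\|_{L^\infty(B_{8R})}^2+\dots)\int_{B_\rho}(\mu^2+|Du|^2)^{p/2}$ plus analogous terms in $D\psi$. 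For $\mathrm{I}_x$, \eqref{x-dependenceH} and Young's inequality reduce it — up to another reabsorbed fraction of the left-hand side — to a multiple of $\int_{B_\rho}g^2(\mu^2+|Du|^2)^{p/2}\,dx$; Hölder's inequality with exponents $\tfrac{p+2}{2}$ and $\tfrac{p+2}{p}$ estimates this by $\big(\int_{B_\rho}g^{p+2}\big)^{\frac2{p+2}}\big(\int_{B_\rho}(\mu^2+|Du|^2)^{\frac{p+2}2}\big)^{\frac{p}{p+2}}$, and the last factor is handled through the Sobolev inequality for $V_p(Du)$ combined with interpolation against the $L^{p^*}$–norm of $u$, so that $\int_{B_\rho}|DV_p(Du)|^2$ reappears with an exponent strictly smaller than one. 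A further use of Young's inequality then puts the whole right-hand side in the form $\varepsilon\int_{B_\rho}|DV_p(Du)|^2+C_\varepsilon\,\mathcal M^{\sigma_1}\mathcal E^{\sigma_2}$, with mass $\mathcal M=\|\psi\|_{L^\infty(B_{8R})}^2+\|u\|_{L^{p^*}(B_{8R})}^2$ and energy $\mathcal E$ the bracket appearing in \eqref{estimate1GG}.

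\textbf{Reabsorption and conclusion.} Collecting the above gives, for all $\tfrac R2\le r<\rho\le 4R$, $\int_{B_r}|\tau_h V_p(Du)|^2/|h|^2\le\varepsilon\int_{B_\rho}|DV_p(Du)|^2+C_\varepsilon(\rho-r)^{-2}\mathcal M^{\sigma_1}\mathcal E^{\sigma_2}$; letting $|h|\to0$ along the directions $e_s$, summing, and using the difference–quotient characterization of $W^{1,2}$ shows $V_p(Du)\in W^{1,2}(B_r)$, and a standard iteration lemma over the nested balls between $B_{R/2}$ and $B_{4R}$ absorbs the $\varepsilon$–term, yielding \eqref{estimate1GG} on $B_{R/2}$ (the passage from $B_{4R}$ to $B_{8R}$ being used only through the local boundedness estimate for $u$). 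Letting $\delta\to0$ finishes the proof. I expect the main obstacle to be precisely the estimate of $\mathrm{I}_x$ (and of $\mathrm{I}_{\mathrm{cut}}$): in the sub-quadratic regime the degenerate weight $(\mu^2+|Du|^2)^{\frac{p-2}{2}}$ in \eqref{obstacleA1} is large exactly where $|Du|$ is small, so the terms produced by the $x$–dependence of $A$ and by the cut-off cannot be disposed of by the crude bounds that suffice when $p\ge2$; it is precisely the $L^\infty$–bound on the obstacle, hence on $u$ — combined with the Sobolev embedding for $V_p(Du)$ and interpolation — that lets one trade the missing integrability of $Du$ for the non-linear dependence $\mathcal M^{\sigma_1}\mathcal E^{\sigma_2}$ in \eqref{estimate1GG}.
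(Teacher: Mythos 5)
Your overall architecture (approximation, difference quotients, the $L^\infty$--interpolation to handle $\int(\mu^2+|Du|^2)^{\frac{p+2}{2}}$, H\"older with exponents $\frac{p+2}{p},\frac{p+2}{2}$ for the coefficient term, and the Iteration Lemma) matches the paper, but there is a genuine gap at the very first step. You test the variational inequality with $\varphi=u-\tau_{-h}\left(\eta^2\tau_h(u-\psi)\right)$ and arrive at
$\int\left<\tau_h A(x,Du),D(\eta^2\tau_h u)\right>\le\int\left<\tau_h A(x,Du),D(\eta^2\tau_h\psi)\right>$.
On the right-hand side the leading term is $\int\eta^2\left<A(x+he_s,Du(x+he_s))-A(x+he_s,Du(x)),\tau_hD\psi\right>$, which by \eqref{obstacleA2} and Young's inequality leaves you with
$\int\eta^2\left(\mu^2+|Du(x)|^2+|Du(x+he_s)|^2\right)^{\frac{p-2}{2}}|\tau_hD\psi|^2$.
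For $p\ge2$ this is harmless (the weight is increasing in $|Du|$ and can be enlarged to include $|D\psi|$), but for $1<p<2$ the weight is large exactly where $|Du|$ is small and there is no pointwise comparison between $|Du|$ and $|D\psi|$ off the contact set; it cannot be traded for $\left(\mu^2+|D\psi|^2\right)^{\frac{p-2}{2}}|\tau_hD\psi|^2\approx|\tau_hV_p(D\psi)|^2$, nor bounded via $\mu^{p-2}$ (the theorem allows $\mu=0$), nor via H\"older against the available integrability of $DV_p(D\psi)$ (only $L^2$) or $D\psi$ (only $L^{p+2}$). This mismatch of weights is precisely why the paper does \emph{not} use your test function: it invokes Theorem \ref{Fuchs} to rewrite the problem as $\div A(x,Du)=-\div A(x,D\psi)\chi_{\{u=\psi\}}$, tests with $\tau_{s,-h}(\eta^2\tau_{s,h}u)$, and expands the right-hand side as $A_x(x,D\psi)+A_\xi(x,D\psi)D^2\psi$ times a double-integral representation of $\tau_{-h}(\eta^2\tau_h\psi)$; then every obstacle term carries weights in $D\psi$ only, through \eqref{x-dependence} and \eqref{A2bis}. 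Your sketch asserts the obstacle contribution ends up bounded by $c\int|DV_p(D\psi)|^2+c\int(\mu^2+|D\psi|^2)^{p/2}$, but with your starting inequality that bound does not follow.

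A second, smaller gap is in the approximation. Mollifying $A$ in $x$ (and regularizing $\psi$) does not by itself make the approximating solutions "regular enough to justify all the computations": the obstacle still caps their regularity, and your a priori estimate needs the qualitative information $V_p(Du_\delta)\in W^{1,2}_{\loc}$ \emph{before} the difference-quotient argument, both to reabsorb $\varepsilon\int|DV_p(Du_\delta)|^2$ and to give meaning to $\int(\mu^2+|Du_\delta|^2)^{\frac{p+2}{2}}$ via Lemma \ref{lemma5GP}. The paper supplies this by quoting the earlier result \cite[Theorem 1.1]{Gentile3} for the mollified operators (whose coefficients are smooth, hence in $W^{1,n}_{\loc}$); some such input must appear in your argument. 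Finally, the strong convergence $Du_\delta\to Du$ in $L^p_{\loc}$, which you assert, requires the monotonicity argument comparing $u$ and $u_\delta$ as mutual test functions in the two variational inequalities; it is not automatic. The regularization of $\psi$ you propose is unnecessary and only complicates the uniform $L^\infty$ bound and the convergence of $V_p(D\psi_\delta)$ in $W^{1,2}_{\loc}$.
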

	
	Let us oberve that, since we are taking $1<p<2$, if we have $p+2<n$ (i.e. if $n\ge4$), under stronger assumptions on the regularity of the obstacle, we can assume less regularity on the coefficients, if we compare this result with the Theorem 1.1 in \cite{Gentile3}, where $g\in L^n$, but the obstacle is not assumed to be bounded. However, the same result can be obtained by removing the assumption of boundedness of the obstacle if we consider a priori bounded minimizers. \\
	Moreover, we can see this result as an extention to the case of sub-quadratic growth and ellipticity, of the result proved in \cite{CGG}.
	
	\section{Notations and preliminary results}\label{Preliminaries}
	
	In this section we list the notations that we use in this paper and recall some tools that will be useful to prove our results.\\
	We shall follow the usual convention and denote by $C$ or $c$ a general constant that may vary on different occasions, even within the same
	line of estimates. Relevant dependencies on parameters and special
	constants will be suitably emphasized using parentheses or
	subscripts. The norm we use on $\R^n$, will be the standard Euclidean one.\\
	For a $C^2$ function $f \colon \Omega\times\R^{n} \to \R$, we write
	$$
	D_\xi f(x,\xi )[\eta ] := \frac{\rm d}{{\rm d}t}\Big|_{t=0} f(x,\xi
	+t\eta )\quad \mbox{ and } \quad D_{\xi\xi}f(x,\xi )[\eta ,\eta ] :=
	\frac{\rm d^2}{{\rm d}t^{2}}\Big|_{t=0} f(x,\xi +t\eta )
	$$
	for $\xi$, $\eta \in \R^{n}$ and for almost every $x\in \Omega$.\\
	With the symbol $B(x,r)=B_r(x)=\{y\in
	\R^n:\,\, |y-x|<r\}$, we will denote the ball centered at $x$ of
	radius $r$ and
	$$(u)_{x_0,r}= \Mint_{B_r(x_0)}u(x)\,dx,$$
	stands for the integral mean of $u$ over the ball $B_r(x_0)$. We
	shall omit the dependence on the center when it is clear from the context.
	In the following, we will denote, for any ball
	$B=B_r(x_0)=\{x\in\R^n: |x-x_0|<r\}\Subset\Omega$
	
	\begin{equation*}
		\Mint_Bu(x)dx=\frac{1}{|B|}\int_Bu(x)dx.
	\end{equation*}
	
Here we recall some results that will be useful in the following.\\
The next lemma can be proved using an iteration technique, so we will refer to this as \text{Iteration Lemma}.

\begin{lemma}[Iteration Lemma]\label{iteration}
	Let $h: [\rho, R]\to \R$ be a nonnegative bounded function, $0<\theta<1$, $A, B\ge0$ and $\gamma>0$. Assume that
	
	$$
	h(r)\le\theta h(d)+\frac{A}{(d-r)^\gamma}+B
	$$
	
	for all $\rho\le r<d\le R_0.$ Then
	
	$$
	h(\rho)\le c\left[\frac{A}{(R_0-\rho)^\gamma}+B\right],
	$$
	
	where $c=c(\theta, \gamma)>0$.
\end{lemma}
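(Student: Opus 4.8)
The plan is to run the classical hole-filling iteration on a geometric sequence of radii. Fix a parameter $\tau\in(0,1)$, to be chosen at the end depending only on $\theta$ and $\gamma$, and define $r_0=\rho$ and
\[
r_{i+1}=r_i+(1-\tau)\,\tau^{i}\,(R_0-\rho),\qquad i\ge 0.
\]
Since $\sum_{i\ge 0}(1-\tau)\tau^{i}=1$, one has $r_i=\rho+(R_0-\rho)(1-\tau^{i})\in[\rho,R_0)$ for every $i$, the sequence is strictly increasing and $r_i\nearrow R_0$; in particular each pair $(r_i,r_{i+1})$ is admissible in the hypothesis and $r_{i+1}-r_i=(1-\tau)\tau^{i}(R_0-\rho)$. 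Applying the assumed inequality with $r=r_i$, $d=r_{i+1}$ gives
\[
h(r_i)\le\theta\,h(r_{i+1})+\frac{A}{(1-\tau)^{\gamma}\,\tau^{i\gamma}\,(R_0-\rho)^{\gamma}}+B .
\]

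First I would iterate this bound $k$ times starting from $i=0$, obtaining
\[
h(\rho)\le\theta^{k}h(r_k)+\frac{A}{(1-\tau)^{\gamma}(R_0-\rho)^{\gamma}}\sum_{i=0}^{k-1}\bigl(\theta\,\tau^{-\gamma}\bigr)^{i}+B\sum_{i=0}^{k-1}\theta^{i}.
\]
The key step is to choose $\tau$ so that $\theta\,\tau^{-\gamma}<1$: since $0<\theta<1$ we have $\theta^{1/\gamma}<1$, so any $\tau\in(\theta^{1/\gamma},1)$ works, and for definiteness one may take $\tau=\bigl(\tfrac{1+\theta}{2}\bigr)^{1/\gamma}$, which yields $\theta\tau^{-\gamma}=\tfrac{2\theta}{1+\theta}<1$ and $\tau\in(0,1)$. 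With this choice both sums are convergent geometric series. Letting $k\to\infty$, the term $\theta^{k}h(r_k)$ tends to $0$ because $h$ is bounded on $[\rho,R_0]$ and $\theta<1$, so we conclude
\[
h(\rho)\le\frac{A}{(1-\tau)^{\gamma}\bigl(1-\theta\tau^{-\gamma}\bigr)(R_0-\rho)^{\gamma}}+\frac{B}{1-\theta},
\]
which is the claimed estimate with $c=c(\theta,\gamma)=\max\Bigl\{\tfrac{1}{(1-\tau)^{\gamma}(1-\theta\tau^{-\gamma})},\tfrac{1}{1-\theta}\Bigr\}$, a constant depending only on $\theta$ and $\gamma$ through the fixed choice of $\tau$.

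I do not expect a genuine obstacle here: the argument is purely elementary. The only two points that require a line of care are the admissibility of the radii $r_i$ in the hypothesis for every $i$ (guaranteed by $\sum_{i\ge0}(1-\tau)\tau^{i}=1$, so that $\rho\le r_i<r_{i+1}\le R_0$) and the convergence of the two geometric series, which is exactly what forces the constraint $\tau^{\gamma}>\theta$ on the choice of $\tau$; boundedness of $h$ is used only to kill the remainder term $\theta^{k}h(r_k)$ in the limit.
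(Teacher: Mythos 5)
Your proof is correct and is precisely the classical argument behind the result the paper cites (Giusti, Lemma 6.1) rather than proves: a geometric sequence of radii $r_i=\rho+(R_0-\rho)(1-\tau^i)$ with $\tau^\gamma>\theta$ so that the series $\sum_i(\theta\tau^{-\gamma})^i$ converges, and boundedness of $h$ to discard the remainder $\theta^k h(r_k)$. The choice $\tau=\bigl(\tfrac{1+\theta}{2}\bigr)^{1/\gamma}$ and the resulting constant are fine, so nothing needs to be added.
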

For the proof we refer to \cite[Lemma 6.1]{23}.\\
\begin{lemma}\label{lemma5GP}
	For any $\phi\in C_0^1(\Omega)$ with $\phi\ge0$, and any $C^2$ map $v:\Omega\to\R^N$, we have
	
	\begin{eqnarray}\label{2.1GP}
		&&\int_\Omega\phi^{\frac{m}{m+1}(p+2)}(x)\left|Dv(x)\right|^{\frac{m}{m+1}(p+2)}dx\cr\cr
		&\le&(p+2)^2\left(\int_\Omega\phi^{\frac{m}{m+1}(p+2)}(x)\left|v(x)\right|^{2m}dx\right)^\frac{1}{m+1}\cr\cr
		&&\cdot\left[\left(\int_\Omega\phi^{\frac{m}{m+1}(p+2)}(x)\left|D\phi(x)\right|^2\left|Dv(x)\right|^pdx\right)^\frac{m}{m+1}\right.\cr\cr
		&&\left.+n\left(\int_\Omega\phi^{\frac{m}{m+1}(p+2)}(x)\left|Dv(x)\right|^{p-2}\left|D^2v(x)\right|^2dx\right)^\frac{m}{m+1}\right],
	\end{eqnarray}
	
	for any $p\in(1, \infty)$ and $m>1$. Moreover, for any $\mu\in[0,1]$
	
	\begin{eqnarray}\label{2.2GP}
		&&\int_{\Omega}\phi^2(x)\left(\mu^2+\left|Dv(x)\right|^2\right)^\frac{p}{2}\left|Dv(x)\right|^2dx\cr\cr
		&\le&c\left\Arrowvert v\right\Arrowvert_{L^\infty\left(\supp(\phi)\right)}^2\int_\Omega\phi^2(x)\left(\mu^2+\left|Dv(x)\right|^2\right)^\frac{p-2}{2}\left|D^2v(x)\right|^2dx\cr\cr
		&&+c\left\Arrowvert v\right\Arrowvert_{L^\infty\left(\supp(\phi)\right)}^2\int_\Omega\left(\phi^2(x)+\left|D\phi(x)\right|^2\right)\left(\mu^2+\left|Dv(x)\right|^2\right)^\frac{p}{2}dx,
	\end{eqnarray}
	
	for a constant $c=c(p).$
\end{lemma}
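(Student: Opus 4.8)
\textbf{Proof strategy for Lemma \ref{lemma5GP}.}

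The plan is to obtain both inequalities by integration by parts combined with H\"older's inequality, exploiting the fact that $\phi$ has compact support so that no boundary terms appear.

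For \eqref{2.1GP}, set $\beta=\frac{m}{m+1}(p+2)$ and start from the scalar identity $|Dv|^\beta=\sum_i (D_iv)(D_iv)|Dv|^{\beta-2}$ (componentwise, $|Dv|^2=\sum_{i,\alpha}(D_iv^\alpha)^2$). The idea is to integrate $\int_\Omega \phi^\beta |Dv|^{\beta-2}\sum_{i,\alpha}(D_iv^\alpha)(D_iv^\alpha)\,dx$ by parts in the variable $x_i$, moving one derivative off $D_iv^\alpha$. This produces three types of terms: one where the derivative hits $\phi^\beta$, giving a factor $|D\phi|\,\phi^{\beta-1}|Dv|^{\beta-1}$; one where it hits $|Dv|^{\beta-2}$, giving a factor $\phi^\beta|Dv|^{\beta-3}|D^2v|$ (times $|Dv|$, since $D|Dv|^{\beta-2}$ is controlled by $|Dv|^{\beta-3}|D^2v|$); and one where it hits the remaining $D_iv^\alpha$, producing $\phi^\beta v^\alpha |Dv|^{\beta-2}|D^2v|$ after writing $D_iD_iv^\alpha$ and integrating — wait, more precisely one keeps a factor $v^\alpha$ and $D^2v$. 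The key point is that in every resulting term a factor $|v|$ or $|D\phi|$ or $|D^2v|$ appears with appropriate powers of $\phi$ and $|Dv|$ so that, after collecting, one can apply H\"older's inequality with the three exponents $m+1$, $\frac{m+1}{m}$ (applied twice, once for the $D\phi$-term and once for the $D^2v$-term) to distribute $\phi^\beta$ as $\phi^{\beta/(m+1)}\cdot\phi^{\beta m/(m+1)}$ and to isolate $\left(\int\phi^\beta|v|^{2m}\right)^{1/(m+1)}$ as the common prefactor. The combinatorial constant $n$ in front of the Hessian term comes from summing over the index $i$, and the $(p+2)^2$ from the two factors $\beta\le p+2$ that are generated when differentiating $\phi^\beta$ and $|Dv|^{\beta-2}$.

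For \eqref{2.2GP}, the strategy is similar but simpler: write the integrand as $\phi^2(\mu^2+|Dv|^2)^{p/2}|Dv|^2$ and integrate by parts once, using $|Dv|^2=\sum_{i,\alpha}D_iv^\alpha D_iv^\alpha$ and moving a derivative from one of the $D_iv^\alpha$ factors. The derivative either lands on $\phi^2$ (yielding a term with $\phi|D\phi|(\mu^2+|Dv|^2)^{p/2}|Dv|\,|v|$, controlled using $|Dv|\le(\mu^2+|Dv|^2)^{1/2}$ and then Young's inequality to absorb part of it), on the weight $(\mu^2+|Dv|^2)^{p/2}$ (yielding $\phi^2|v|(\mu^2+|Dv|^2)^{(p-1)/2}|D^2v|\cdot|Dv|\lesssim\phi^2|v|(\mu^2+|Dv|^2)^{p/2-1}|D^2v|\cdot(\mu^2+|Dv|^2)$, i.e. the first term on the right), or on the remaining $D_iv^\alpha$ (yielding $\phi^2|v|(\mu^2+|Dv|^2)^{p/2}|D^2v|$, again absorbed into the first term on the right after using $(\mu^2+|Dv|^2)^{p/2}\le(\mu^2+|Dv|^2)^{p/2-1}(\mu^2+|Dv|^2)$ and Young). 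Pulling $\|v\|_{L^\infty(\supp\phi)}$ out of every term involving $|v|$ and applying Young's inequality with a small parameter to reabsorb the terms carrying $|Dv|^2(\mu^2+|Dv|^2)^{p/2}$ into the left-hand side yields the claimed bound, with the constant depending only on $p$.

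The main obstacle I expect is bookkeeping in \eqref{2.1GP}: keeping track of the exact powers of $\phi$, of $|Dv|$, and of $|v|$ so that the three H\"older exponents $m+1,\frac{m+1}{m},\frac{m+1}{m}$ fit exactly and the prefactor $\left(\int\phi^\beta|v|^{2m}\right)^{1/(m+1)}$ emerges cleanly — in particular verifying that $\beta-3+p=\beta\cdot\frac{m}{m+1}\cdot\frac{1}{\,}$... i.e. that the power of $|Dv|$ in the Hessian term is exactly $p-2$ after the H\"older split, which forces the specific choice $\beta=\frac{m}{m+1}(p+2)$. Establishing that the differentiation of $|Dv|^{\beta-2}$ is licit (it is, since $v\in C^2$ and the bound is only needed where $Dv\neq 0$; where $Dv=0$ the integrand and all the terms vanish for $p>1$) is a minor technical point handled by working on $\{|Dv|>\e\}$ and letting $\e\to0$.
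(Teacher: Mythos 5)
First, note that the paper does not prove this lemma at all: it is quoted as a known result (it is the interpolation lemma of Giova--Passarelli di Napoli / \cite{CGGP1}), so there is no in-paper argument to compare yours against. Your strategy for \eqref{2.2GP} is the standard and correct one: integrate by parts once, pull out $\lnorm v\rnorm_{L^\infty(\supp\phi)}$, and reabsorb the terms carrying $(\mu^2+|Dv|^2)^{p/2}|Dv|^2$ by Young's inequality with a small parameter.

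For \eqref{2.1GP}, however, there is a genuine gap, and it is not mere bookkeeping. Write $\beta=\frac{m}{m+1}(p+2)$ and $I=\int\phi^\beta|Dv|^\beta$. One integration by parts produces terms carrying exactly one power of $|v|$, e.g. the Hessian term $T_2=\int|v|\,\phi^\beta|Dv|^{\beta-2}|D^2v|$. Your proposed two-exponent H\"older with $\bigl(m+1,\frac{m+1}{m}\bigr)$ would require factoring the integrand of $T_2$ as $A\cdot B$ with $A^{m+1}=\phi^\beta|v|^{2m}$ and $B^{\frac{m+1}{m}}=\phi^\beta|Dv|^{p-2}|D^2v|^2$; but then $A\cdot B$ carries $|v|^{\frac{2m}{m+1}}$ and $|D^2v|^{\frac{2m}{m+1}}$, whereas $T_2$ only has $|v|^{1}$ and $|D^2v|^{1}$, and the $|Dv|$-exponents differ by $\frac{2(m-1)}{m+1}$. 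Since $m>1$ these mismatches are strict, so the exponents do not close. The missing idea is a \emph{self-absorption}: apply a three-exponent H\"older with $\bigl(2m,\,2,\,\frac{2m}{m-1}\bigr)$, whose third factor is a fractional power of the left-hand side itself. Indeed one checks $\frac{p-2}{2}+\frac{\beta(m-1)}{2m}=\beta-2$, whence
\begin{equation*}
T_2\le\left(\int\phi^\beta|v|^{2m}\right)^{\frac{1}{2m}}\left(\int\phi^\beta|Dv|^{p-2}|D^2v|^2\right)^{\frac12}I^{\frac{m-1}{2m}},
\end{equation*}
and similarly for the $D\phi$-term. Collecting, $I\le c\,U^{\frac{1}{2m}}\bigl(V^{\frac12}+W^{\frac12}\bigr)I^{\frac{m-1}{2m}}$; dividing by $I^{\frac{m-1}{2m}}$ (finite since $v\in C^2$ and $\phi$ has compact support) and raising to the power $\frac{2m}{m+1}$ is precisely what produces the exponents $\frac{1}{m+1}$ and $\frac{m}{m+1}$ in the statement. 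Without this absorption step your H\"older split cannot be made to work. A secondary point: your regularization remark should also cover the factor $|Dv|^{\beta-2}$ itself, which is singular at $\{Dv=0\}$ when $\beta<2$ (possible for $p$ close to $1$), not only the question of differentiability there.
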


The following result is proved for solutions to the problem \eqref{functionalobstacle} as Theorem 1.1 in \cite{CEP} in the case of $p,q-$growth conditions with respect to the gradient variable with $2\le p\le q$, but the proof still works exactly in the same way under sub-quadratic growth conditions, so that for $1<p=q<2$ this suits with our ellipticity and growth assumptions.

\begin{thm}\label{boundedness}
	Let u in $\mathcal{K}_{\psi}(\Omega)$ be a solution of
	\eqref{functionalobstacle} under the assumptions $\eqref{obstacleA3}-\eqref{obstacleA1}$. If
	the obstacle $\psi \in L^{\infty}_{\loc}\left(\Omega\right)$, then $u
	\in L^{\infty}_{\loc}\left(\Omega\right)$ and the following estimate
	\begin{equation}\label{est boundedness}
	\lnorm u\rnorm_{L^\infty\left(B_{\frac{R}{2}}\right)} \le \left[ \lnorm \psi\rnorm_{L^\infty(B_{R})}+ \left(
		\int_{B_R}\left|u(x)\right|^{p^*} dx\right)\right]^{\gamma}
	\end{equation}
	holds for every ball $B_R \Subset \Omega$, for $\gamma(n,p)>0$ and
	$c=c(\ell, \nu, p, n)$,
\end{thm}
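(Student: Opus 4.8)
Only the upper bound requires an argument: every admissible $v\in\mathcal K_\psi(\Omega)$, in particular $u$, satisfies $v\ge\psi\ge-\lnorm\psi\rnorm_{L^\infty(B_R)}$ a.e.\ on $B_R$, so the lower bound is automatic. The plan is to run a De~Giorgi iteration on the super-level sets of $u$, directly on the variational inequality \eqref{variationalinequality}. Fix $B_R\Subset\Omega$ and set $k_0:=\lnorm\psi\rnorm_{L^\infty(B_R)}$; for $\tfrac R2\le\rho<r\le R$ pick $\eta\in C^1_0(B_r)$ with $\eta\equiv1$ on $B_\rho$, $0\le\eta\le1$, $|D\eta|\le 2/(r-\rho)$. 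The key structural observation is that for every level $k\ge k_0$ the competitor
\[
\varphi:=u-\eta^{p}(u-k)_+
\]
belongs to $\mathcal K_\psi(\Omega)$: indeed $\varphi-u\in W^{1,p}_0(\Omega)$, $\varphi=u$ outside $B_r$, and on $B_r$ one has $\varphi=(1-\eta^{p})u+\eta^{p}k$, a convex combination of $u\ge\psi$ (admissibility) and of $k\ge k_0\ge\psi$ on $B_R$, hence $\varphi\ge\psi$.

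Inserting $\varphi$ into \eqref{variationalinequality} and using $D(\varphi-u)=-p\,\eta^{p-1}(u-k)_+D\eta-\eta^{p}\chi_{\{u>k\}}Du$, I would obtain, writing $A_{k,s}:=\{x\in B_s:u(x)>k\}$,
\[
\int_{A_{k,r}}\eta^{p}\langle A(x,Du),Du\rangle\,dx\le p\int_{A_{k,r}}\eta^{p-1}(u-k)\,\lAbs\langle A(x,Du),D\eta\rangle\rAbs\,dx .
\]
From \eqref{obstacleA1} tested with $\eta=0$ together with $|A(x,0)|\le\ell\mu^{p-1}$ (a consequence of \eqref{obstacleA3}) one gets $\langle A(x,Du),Du\rangle\ge c|Du|^p-c$, while \eqref{obstacleA3} gives $|A(x,Du)|\le c(1+|Du|^{p-1})$. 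Substituting these and applying Young's inequality to reabsorb the term $\varepsilon\int_{A_{k,r}}\eta^{p}|Du|^p$ into the left-hand side (if convenient, using the Iteration Lemma~\ref{iteration} to clear an intermediate gradient term), this yields the Caccioppoli inequality on super-level sets
\[
\int_{A_{k,\rho}}|Du|^p\,dx\le\frac{c}{(r-\rho)^p}\int_{A_{k,r}}(u-k)^p\,dx+c\,|A_{k,r}|,\qquad k\ge k_0,\ \ \tfrac R2\le\rho<r\le R .
\]

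Next, applying the Sobolev--Poincar\'e inequality to $\eta(u-k)_+\in W^{1,p}_0(B_r)$ and H\"older's inequality on $A_{k,r}$ (recall $n>2>p$, so $p^*=\tfrac{np}{n-p}$ is finite), the Caccioppoli estimate upgrades to a nonlinear recursion: there are exponents $\gamma_1,\gamma_2>0$ and $\delta>0$ depending only on $n,p$ such that, for $k_0\le k<h$ and $\tfrac R2\le\rho<r\le R$,
\[
\int_{A_{h,\rho}}(u-h)^p\,dx\le\frac{c}{(h-k)^{\gamma_1}(r-\rho)^{\gamma_2}}\left(\int_{A_{k,r}}(u-k)^p\,dx\right)^{1+\delta}.
\]
A standard iteration lemma for fast-decaying sequences (see, e.g., \cite{23}), applied along $k_j=k_0+d(1-2^{-j})$ and $\rho_j=\tfrac R2(1+2^{-j})$, then forces $\int_{A_{k_0+d,\,R/2}}(u-k_0-d)^p\,dx=0$, i.e.\ $u\le k_0+d$ a.e.\ on $B_{R/2}$, as soon as $d$ is a suitable power of $\big(\int_{B_R}(|u|+k_0)^p\,dx\big)$ times a power of $R$. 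Estimating that quantity through $\lnorm\psi\rnorm_{L^\infty(B_R)}$ and $\int_{B_R}|u|^{p^*}\,dx$ by H\"older and Chebyshev, and combining with the lower bound from the first step, one recovers \eqref{est boundedness}.

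I expect the only genuinely delicate point to be the construction and admissibility of the competitor $\varphi$, together with bookkeeping the powers so the threshold $d$ has the form appearing in \eqref{est boundedness}; the constraint itself is benign for the $L^\infty$-bound, as it merely restricts the admissible levels to $k\ge\lnorm\psi\rnorm_{L^\infty(B_R)}$. The subquadratic range $1<p<2$ causes no extra trouble, since \eqref{obstacleA1} still produces the $|Du|^p$ lower bound after absorbing lower-order terms, so the scheme proceeds exactly as in the proof of \cite[Theorem~1.1]{CEP}.
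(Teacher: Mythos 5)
Your De Giorgi iteration on super-level sets $k\ge\lnorm\psi\rnorm_{L^\infty(B_R)}$, with the admissible competitor $u-\eta^{p}(u-k)_+$, is sound and is precisely the argument the paper relies on: the paper does not reprove this theorem but cites \cite[Theorem 1.1]{CEP} and observes that the same scheme carries over to $1<p<2$. So your proposal is correct and takes essentially the same approach.
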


	Let us conclude this section with a result (see \cite{Fuchs, FM}), that is useful to prove Theorem \ref{thm1GG}.
	
	\begin{thm}\label{Fuchs}
		For any $1<p<\infty$, a function $u\in W^{1, p}_{\loc}\left(\Omega\right)$ is a solution to the problem \eqref{functionalobstacle} if and only if it is a weak solution of the following equation:
		
		\begin{equation}\label{diveq}
			\div A\left(x, Du(x)\right)=-\div A(x, D\psi(x))\chi_{\set{u=\psi}}(x).
		\end{equation}
	\end{thm}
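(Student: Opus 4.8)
\textbf{Proof strategy for Theorem \ref{Fuchs}.}

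The plan is to establish the equivalence in two directions, exploiting the variational inequality formulation \eqref{variationalinequality} of the obstacle problem, which is available by the standard convexity argument already recalled in the introduction. Throughout I would work with the operator $A(x,\xi)=D_\xi f(x,\xi)$ and use that, under \eqref{obstacleA3}--\eqref{obstacleA2}, the map $\xi\mapsto f(x,\xi)$ is convex, so that $u\in W^{1,p}_{\loc}(\Omega)$ minimizes \eqref{functionalobstacle} in $\mathcal K_\psi(\Omega)$ if and only if $u\in\mathcal K_\psi(\Omega)$ solves \eqref{variationalinequality}. The heart of the matter is to convert the one-sided variational inequality, tested only against admissible competitors $\varphi\ge\psi$, into the two-sided distributional identity \eqref{diveq}, in which the defect measure is precisely the concentration of $-\div A(x,D\psi)$ on the contact set $\{u=\psi\}$.

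First I would prove the implication ``solution of \eqref{functionalobstacle} $\Rightarrow$ weak solution of \eqref{diveq}''. Given a test function $\phi\in C_0^\infty(\Omega)$, one wants to insert $\varphi=u\pm t\phi$ into \eqref{variationalinequality}; the obstruction is that $u\pm t\phi$ need not lie above $\psi$. The remedy, following Fuchs--Mingione, is to use truncated variations that are genuinely admissible: for $t>0$ the competitor $\varphi_t=u+t\phi^+$ is admissible when $\phi\ge0$, giving $\int_\Omega\langle A(x,Du),D\phi\rangle\,dx\ge 0$ for all nonnegative $\phi\in C_0^\infty(\Omega)$, i.e. $-\div A(x,Du)\ge0$ as a distribution, hence a nonnegative Radon measure $\lambda$. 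To identify $\lambda$ one uses that on the \emph{non-contact} set $\{u>\psi\}$ small two-sided perturbations are admissible, forcing $\div A(x,Du)=0$ there, so $\lambda$ is carried by $\{u=\psi\}$; and on that set $Du=D\psi$ a.e. (a standard fact for Sobolev functions), so one may compute $\lambda$ by testing with competitors that push $u$ down to $\psi$ on a neighborhood of the contact set. Concretely, choosing $\varphi=u+(\psi-u)\eta = \eta\psi+(1-\eta)u$ with $\eta\in C_0^\infty$, $0\le\eta\le1$ — which is admissible since it is a convex combination of $u\ge\psi$ and $\psi$ — and letting $\eta$ approximate the characteristic function of a neighborhood of $\{u=\psi\}$, one extracts that $\lambda=-\div\big(A(x,D\psi)\chi_{\{u=\psi\}}\big)$ in the sense of distributions, which is exactly \eqref{diveq}.

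For the converse, ``weak solution of \eqref{diveq} with $u\in\mathcal K_\psi(\Omega)$ $\Rightarrow$ solution of \eqref{functionalobstacle}'', I would argue by convexity. Let $\varphi\in\mathcal K_\psi(\Omega)$ be arbitrary; then $\varphi-u\in W^{1,p}_0$ and, using that $\chi_{\{u=\psi\}}(D\varphi-D\psi)=\chi_{\{u=\psi\}}(D\varphi-Du)$ together with $\varphi\ge\psi$, one checks $\langle A(x,D\psi)\chi_{\{u=\psi\}},D\varphi-Du\rangle\ge0$ pointwise a.e. — here one uses monotonicity \eqref{obstacleA1} in the form $\langle A(x,D\psi),D\varphi-D\psi\rangle\ge -\langle A(x,D\varphi),D\psi-D\varphi\rangle$ is not needed; rather the sign comes from ellipticity once one notes that on $\{u=\psi\}$ the competitor gradient satisfies $D\varphi\ge D\psi$ only in the relevant coordinate directions, so I would instead simply test \eqref{diveq} with $\varphi-u$ and combine with monotonicity of $A(x,\cdot)$ to get $\int_\Omega\langle A(x,Du),D\varphi-Du\rangle\,dx\ge0$, which is \eqref{variationalinequality}, hence $u$ minimizes by convexity of $f(x,\cdot)$. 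The main obstacle is the correct identification of the defect measure in the forward direction — making rigorous that it is absolutely continuous with respect to the contact set and equals $-\div(A(x,D\psi)\chi_{\{u=\psi\}})$, rather than merely being supported on $\{u=\psi\}$; this is where the admissible convex-combination competitors and the a.e.\ identity $Du=D\psi$ on $\{u=\psi\}$ do the essential work. Since this is classical, I would mostly cite \cite{Fuchs, FM} and reproduce only the competitor constructions that make the equivalence transparent in our sub-quadratic setting.
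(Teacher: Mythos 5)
First, note that the paper does not actually prove Theorem \ref{Fuchs}: it is quoted from \cite{Fuchs, FM}, and only the implication ``minimizer $\Rightarrow$ weak solution of \eqref{diveq}'' (in the weak form \eqref{diveqtestGG}) is used later. Your plan to cite those references and only sketch the competitor constructions is therefore consistent with what the paper does. Your sketch of the forward direction is essentially the classical argument: one-sided variations $u+t\phi^{+}$ show that $-\div A(x,Du)$ is a nonnegative distribution, hence a Radon measure carried by the contact set, and $Du=D\psi$ a.e.\ on $\{u=\psi\}$ identifies its density there. Two caveats: the identification is normally carried out with the admissible competitor $\varphi_t=\max(u+t\phi,\psi)$ and the limit $t\to0^{+}$, not with cut-offs $\eta$ approximating ``a neighbourhood of $\{u=\psi\}$'' (the contact set is not open, and such neighbourhoods overshoot it, so this step as written does not pin down the measure); and the right-hand side of \eqref{diveq} only makes sense once $\psi$ is regular enough that $\div A(\cdot,D\psi)\in L^{1}_{\loc}$, an implicit hypothesis you should state.

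The genuine gap is in your converse. The pointwise inequality $\left<A(x,D\psi),D\varphi-D\psi\right>\ge0$ a.e.\ on $\{u=\psi\}$ does not follow from $\varphi\ge\psi$: there is no pointwise order between $D\varphi$ and $D\psi$ induced by $\varphi\ge\psi$, and the half-retracted remark about ``relevant coordinate directions'' does not repair this. Nor does monotonicity \eqref{obstacleA1} help: testing the weak formulation with $\varphi-u$ leaves the term $-\int_{\{u=\psi\}}\div A(x,D\psi)\,(\varphi-\psi)\,dx$, whose sign is governed by the sign of $\div A(x,D\psi)$ on the contact set, not by the monotonicity of $\xi\mapsto A(x,\xi)$. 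To close the argument one must establish that $-\div A(x,D\psi)\ge0$ a.e.\ on $\{u=\psi\}$ (equivalently, that the defect measure is nonnegative); for the forward direction this is a consequence of minimality, but for the converse it is an additional fact that must be supplied, and your sketch does not supply it. Separately, be aware that the sign in \eqref{diveq} is inconsistent with the weak formulation \eqref{diveqtestGG} actually used in the paper (the model case $A(x,\xi)=\xi$ gives $\Delta u=\Delta\psi\,\chi_{\{u=\psi\}}$, which is what \eqref{diveqtestGG} encodes), so any written-out proof should be pinned to the weak form.
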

	
\subsection{Difference quotients}\label{diffquot}
A key instrument in studying regularity properties of solutions to problems of Calculus of Variations and PDEs is the so called {\em difference quotients method}.\\
In this section, we recall the definition and some basic results.
\begin{definition}
	Given $h\in\R$, for every function
	$F:\mathbb{R}^{n}\to\mathbb{R}^N$, for any $s=1,..., n$ the finite difference operator in the direction $x_s$ is
	defined by
	$$
	\tau_{s, h}F(x)=F(x+he_s)-F(x),
	$$
	where $e_s$ is the unit vector in the direction $x_s$.
\end{definition}
In the following, in order to simplify the notations, we will omit the vector $e_s$ unless it is necessary, denoting
$$
\tau_{h}F(x)=F(x+h)-F(x),
$$
where $h\in\R^n$.\\

We now describe some properties of the operator $\tau_{h}$ whose proofs can be found, for example, in \cite{23}.

\bigskip

\begin{proposition}\label{findiffpr}
	
	Let $F$ and $G$ be two functions such that $F, G\in
	W^{1,p}(\Omega)$, with $p\geq 1$, and let us consider the set
	$$
	\Omega_{|h|}:=\Set{x\in \Omega : d\left(x,
		\partial\Omega\right)>\left|h\right|}.
	$$
	Then
	\begin{description}
		\item{$(a)$} $\tau_{h}F\in W^{1,p}\left(\Omega_{|h|}\right)$ and
		$$
		D_{i} (\tau_{h}F)=\tau_{h}(D_{i}F).
		$$
		\item{$(b)$} If at least one of the functions $F$ or $G$ has support contained
		in $\Omega_{|h|}$ then
		$$
		\int_{\Omega} F(x) \tau_{h} G(x) dx =\int_{\Omega} G(x) \tau_{-h}F(x)
		dx.
		$$
		\item{$(c)$} We have
		$$
		\tau_{h}(F G)(x)=F(x+h )\tau_{h}G(x)+G(x)\tau_{h}F(x).
		$$
	\end{description}
\end{proposition}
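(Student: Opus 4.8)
The three assertions $(a)$, $(b)$, $(c)$ are elementary consequences of the definition of $\tau_h$ together with the translation invariance of the Lebesgue measure, so the plan is to make the domain bookkeeping precise and then verify each identity by a direct computation. The organizing remark is that $\Omega_{|h|}$ is exactly the set of points $x$ for which the whole segment joining $x$ and $x+h$ stays inside $\Omega$, so every translate that appears below is defined where it is needed. I would start with $(c)$, which is pure algebra and requires no regularity: for $x\in\Omega_{|h|}$, adding and subtracting the cross term $F(x+h)G(x)$ gives
\[
\tau_h(FG)(x)=F(x+h)G(x+h)-F(x)G(x)=F(x+h)\,\tau_hG(x)+G(x)\,\tau_hF(x).
\]

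For $(a)$ the key step is the claim that the translation $F\mapsto F(\cdot+h)$ maps $W^{1,p}(\Omega)$ into $W^{1,p}(\Omega_{|h|})$ with $D_i\big(F(\cdot+h)\big)=(D_iF)(\cdot+h)$. I would prove this by testing against $\varphi\in C_0^\infty(\Omega_{|h|})$: since $\supp\varphi\Subset\Omega_{|h|}$, the shifted function $\varphi(\cdot-h)$ belongs to $C_0^\infty(\Omega)$, so the substitution $y=x+h$ and the definition of the weak derivative of $F$ yield
\[
\begin{aligned}
\int_{\Omega_{|h|}} F(x+h)\,D_i\varphi(x)\,dx
&=\int_\Omega F(y)\,D_i\varphi(y-h)\,dy\\
&=-\int_\Omega (D_iF)(y)\,\varphi(y-h)\,dy
=-\int_{\Omega_{|h|}} (D_iF)(x+h)\,\varphi(x)\,dx.
\end{aligned}
\]
Since $F(\cdot+h)$ and $(D_iF)(\cdot+h)$ lie in $L^p(\Omega_{|h|})$, again by invariance of the measure, this identifies the weak gradient. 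Then $\tau_hF=F(\cdot+h)-F$ is a difference of two functions in $W^{1,p}(\Omega_{|h|})$, hence belongs to $W^{1,p}(\Omega_{|h|})$, and by linearity $D_i(\tau_hF)=(D_iF)(\cdot+h)-D_iF=\tau_h(D_iF)$.

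For $(b)$ I would assume $\supp G\subset\Omega_{|h|}$, the other case being symmetric after replacing $h$ with $-h$. The point to check is that for $y\in\supp G$ one has $B(y,|h|)\Subset\Omega$, so $y-h\in\Omega$ and $F(y-h)$ is well defined, and likewise the integrand $x\mapsto F(x)G(x+h)$ is supported in $\supp G-h\subset\Omega$. Extending $G$ by zero outside $\Omega$ and using $y=x+h$,
\[
\begin{aligned}
\int_\Omega F(x)\,\tau_hG(x)\,dx
&=\int_\Omega F(x)G(x+h)\,dx-\int_\Omega F(x)G(x)\,dx\\
&=\int_\Omega F(y-h)G(y)\,dy-\int_\Omega F(y)G(y)\,dy
=\int_\Omega G(y)\,\tau_{-h}F(y)\,dy.
\end{aligned}
\]

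The only genuine subtlety in all of this is the domain bookkeeping — keeping track of which translates are defined on which open set and why the relevant supports stay inside $\Omega$ — and that is precisely what the definition of $\Omega_{|h|}$ is designed to handle. Beyond that, $(a)$ uses only the standard fact that a function belongs to $W^{1,p}$ of an open set as soon as it and a candidate gradient lie in $L^p$ and the integration-by-parts identity holds against all test functions, so no density or approximation argument is required.
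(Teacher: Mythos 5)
Your proof is correct and complete; the paper itself does not prove this proposition but simply cites Giusti's book, and your argument (the algebraic identity for $(c)$, the change of variables $y=x+h$ against test functions $\varphi(\cdot-h)\in C_0^\infty(\Omega)$ for $(a)$, and the translation-invariance/support argument for $(b)$) is exactly the standard one found there. The domain bookkeeping via $\Omega_{|h|}$ is handled properly, so there is nothing to add.
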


\noindent The next result about finite difference operator is a kind
of integral version of Lagrange Theorem.
\begin{lemma}\label{le1} If $0<\rho<R$, $|h|<\frac{R-\rho}{2}$, $1<p<+\infty$,
	and $F, DF\in L^{p}(B_{R})$ then
	$$
	\int_{B_{\rho}} |\tau_{h} F(x)|^{p}\ dx\leq c(n,p)|h|^{p}
	\int_{B_{R}} |D F(x)|^{p}\ dx .
	$$
	Moreover
	$$
	\int_{B_{\rho}} |F(x+h )|^{p}\ dx\leq \int_{B_{R}} |F(x)|^{p}\ dx .
	$$
\end{lemma}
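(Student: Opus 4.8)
The plan is to establish the first inequality by reducing, via a density argument, to the fundamental theorem of calculus along the segment $[x,x+h]$, and then to obtain the second inequality as an immediate consequence of the translation invariance of Lebesgue measure.

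First I would fix radii $\rho<R'<R$ with $|h|<\frac{R'-\rho}{2}$ (possible since $|h|<\frac{R-\rho}{2}$) and approximate $F$ by functions $F_k\in C^\infty(\overline{B_{R'}})$ with $F_k\to F$ and $DF_k\to DF$ in $L^p(B_{R'})$. For each $x\in B_\rho$ the whole segment from $x$ to $x+h$ lies in $B_{R'}$, because $|h|<R'-\rho$, so I may write
$$\tau_h F_k(x)=F_k(x+h)-F_k(x)=\int_0^1\langle DF_k(x+th),h\rangle\,dt,$$
and Jensen's inequality on the probability space $(0,1)$ yields the pointwise bound $|\tau_h F_k(x)|^p\le|h|^p\int_0^1|DF_k(x+th)|^p\,dt$.

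Next I would integrate this over $B_\rho$, interchange the order of integration by Fubini (the integrand is continuous on $B_\rho\times[0,1]$), and, for each fixed $t\in[0,1]$, perform the measure-preserving change of variables $y=x+th$, which maps $B_\rho$ onto $B_\rho+th\subset B_{\rho+|h|}\subset B_{R'}$. This gives
$$\int_{B_\rho}|\tau_h F_k(x)|^p\,dx\le|h|^p\int_{B_{R'}}|DF_k(y)|^p\,dy.$$
Passing to the limit $k\to\infty$ — using that translation is an isometry on $L^p$, hence $\tau_h F_k\to\tau_h F$ in $L^p(B_\rho)$, together with $DF_k\to DF$ in $L^p(B_{R'})$ and $\int_{B_{R'}}\le\int_{B_R}$ — delivers the first claimed estimate, in fact with constant $c=1$; the stated dependence $c(n,p)$ is of course also admissible.

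Finally, for the second inequality no differentiability is needed: since $|h|<\frac{R-\rho}{2}<R-\rho$, the map $y=x+h$ carries $B_\rho$ onto $B_\rho+h\subset B_{\rho+|h|}\subset B_R$, whence $\int_{B_\rho}|F(x+h)|^p\,dx=\int_{B_\rho+h}|F(y)|^p\,dy\le\int_{B_R}|F(y)|^p\,dy$. There is essentially no real obstacle in this argument; the only point deserving a word of justification is the density step, which is legitimate because on the strictly smaller ball $B_{R'}$ smooth functions are norm-dense in $W^{1,p}$ and every quantity appearing depends continuously on $F$ in that topology. Alternatively one may bypass density altogether by invoking the absolute-continuity-on-lines characterization of Sobolev functions, which validates the identity $\tau_h F(x)=\int_0^1\langle DF(x+th),h\rangle\,dt$ for almost every $x$ directly.
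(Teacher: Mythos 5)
Your proof is correct, and it is essentially the standard argument (density of smooth functions, the fundamental theorem of calculus along the segment $[x,x+h]$, Jensen's inequality, Fubini and a translation change of variables) that the paper relies on by citing \cite{23}, where this lemma is proved in exactly this way; the paper itself gives no proof. Your observation that the constant can be taken equal to $1$ is accurate and only strengthens the stated estimate.
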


The following result is proved in \cite{23}.

\begin{lemma}\label{Giusti8.2}
	Let $F:\R^n\to\R^N$, $F\in L^p\left(B_R\right)$ with $1<p<+\infty$. Suppose that there exist $\rho\in(0, R)$ and $M>0$ such that
	
	$$
	\sum_{s=1}^{n}\int_{B_\rho}|\tau_{s, h}F(x)|^pdx\le M^p|h|^p
	$$
	
	for $\left|h\right|<\frac{R-\rho}{2}$. Then $F\in W^{1,p}(B_R, \R^N)$. Moreover
	
	$$
	\left\Arrowvert DF \right\Arrowvert_{L^p(B_\rho)}\le M,
	$$
	
	$$
	\left\Arrowvert F\right\Arrowvert_{L^{\frac{np}{n-p}}(B\rho)}\le c\left(M+\left\Arrowvert F\right\Arrowvert_{L^p(B_R)}\right),
	$$
	
	with $c=c(n, N, p, \rho, R)$. Moreover		
	$$\frac{\tau_{s, h}F}{\left|h\right|}\to D_sF\qquad\mbox{ in }L^p_{\loc}\left(\Omega\right),\mbox{ as }h\to0,$$
	for each $s=1, ..., n.$
\end{lemma}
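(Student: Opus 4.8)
The plan is to run the classical weak-compactness argument for difference quotients, whose only essential ingredient beyond elementary manipulations is the reflexivity of $L^p$ for $1<p<+\infty$. First I would fix a coordinate direction $s\in\{1,\dots,n\}$ and, for $0<|h|<\frac{R-\rho}{2}$, look at the difference quotient $g_{s,h}:=\tau_{s,h}F/h$, which is well defined a.e.\ on $B_\rho$ because $B_{\rho+|h|}\subset B_R$. Dropping the nonnegative contributions of the other directions in the assumption yields $\int_{B_\rho}|g_{s,h}|^{p}\,dx\le M^{p}$ uniformly in such $h$. Since $L^{p}(B_\rho,\R^N)$ is reflexive, along any sequence $h_k\to0$ I can extract a subsequence with $g_{s,h_k}\rightharpoonup v_s$ weakly in $L^{p}(B_\rho,\R^N)$, and weak lower semicontinuity of the norm gives $\|v_s\|_{L^{p}(B_\rho)}\le M$.

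Next I would identify $v_s$ with the weak derivative $D_sF$. For $\varphi\in C^\infty_0(B_\rho)$ and $|h|<\dist(\supp\varphi,\partial B_\rho)$, the elementary change-of-variables identity $\int\varphi\,\tau_{s,h}F=\int F\,\tau_{s,-h}\varphi$ (which needs only $F\in L^1(B_\rho)$ and $\varphi$ compactly supported; cf.\ Proposition~\ref{findiffpr}$(b)$) gives $\int_{B_\rho}g_{s,h}\,\varphi\,dx=\int_{B_\rho}F\,\tfrac{\tau_{s,-h}\varphi}{h}\,dx$. Letting $h_k\to0$, the right-hand side converges to $-\int_{B_\rho}F\,D_s\varphi\,dx$, since $\tau_{s,-h_k}\varphi/h_k\to -D_s\varphi$ uniformly and $F\in L^1(B_\rho)$, while the left-hand side converges to $\int_{B_\rho}v_s\,\varphi\,dx$ by the weak convergence (here $\varphi\in L^{p'}(B_\rho)$). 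Hence $v_s=D_sF$ on $B_\rho$ in the weak sense; carrying this out for every $s$ shows $F\in W^{1,p}(B_\rho,\R^N)$, with $\sum_s\|D_sF\|_{L^{p}(B_\rho)}^{p}\le\liminf_k\sum_s\|g_{s,h_k}\|_{L^{p}(B_\rho)}^{p}\le M^{p}$, i.e.\ the asserted bound on $\|DF\|_{L^{p}(B_\rho)}$. The $L^{np/(n-p)}$-estimate then comes from the Sobolev--Poincar\'e inequality on $B_\rho$, namely $\|F-(F)_{B_\rho}\|_{L^{np/(n-p)}(B_\rho)}\le c(n,p,\rho)\|DF\|_{L^{p}(B_\rho)}\le cM$, combined with $|(F)_{B_\rho}|\le c\,\|F\|_{L^{p}(B_R)}$ from H\"older's inequality.

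It remains to upgrade this subsequential weak convergence to the full strong convergence $\tau_{s,h}F/h\to D_sF$ in $L^{p}_{\loc}$ as $h\to0$. Since $F\in W^{1,p}_{\loc}$ is now known, I would argue in the standard way: fix $B_{\rho'}\Subset B_\rho$ and mollify, $F_\e\to F$ in $W^{1,p}(B_{\rho'})$; for the smooth functions one has $\tau_{s,h}F_\e(x)=\int_0^{h}D_sF_\e(x+te_s)\,dt$, so $\|\tau_{s,h}F_\e/h-D_sF_\e\|_{L^{p}(B_{\rho'})}\le\sup_{|t|<|h|}\|D_sF_\e(\cdot+te_s)-D_sF_\e\|_{L^{p}(B_{\rho'})}\to0$ by continuity of translations in $L^{p}$, while the error $\tau_{s,h}(F-F_\e)/h$ is controlled in $L^{p}(B_{\rho'})$ uniformly in $h$ by Lemma~\ref{le1} applied to $F-F_\e$. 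A routine $\e/3$ argument then closes it, and in particular shows the limit does not depend on the chosen sequence $h_k$.

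The only genuine obstacle is the very first step --- extracting a weak derivative from nothing more than $L^{p}$-bounded difference quotients. This is precisely where the hypothesis $1<p<+\infty$ enters essentially, through the reflexivity of $L^{p}$ that supplies the weakly convergent subsequence, the weak limit then being pinned down as the distributional gradient by the discrete integration-by-parts formula; for $p=1$ the statement fails, one only recovering a function of bounded variation. Everything after that --- weak lower semicontinuity, the Sobolev inequality, continuity of translations --- is routine.
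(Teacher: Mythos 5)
Your argument is correct and is essentially the classical proof from Giusti's book, which is exactly what the paper relies on here (it gives no proof of its own, citing \cite{23}): uniform $L^p$ bounds on difference quotients, reflexivity of $L^p$ to extract a weak limit, identification of that limit as the distributional derivative via the discrete integration-by-parts formula, weak lower semicontinuity for the norm bound, Sobolev--Poincar\'e for the $L^{np/(n-p)}$ estimate, and mollification plus Lemma \ref{le1} for the strong $L^p_{\loc}$ convergence of the difference quotients. No gaps; the only cosmetic caveat is that the space in the conclusion should be read as $W^{1,p}(B_\rho,\R^N)$, which is what you prove and what the hypotheses can give.
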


\subsection{An auxiliary function}\label{VpSection}

Here we define an auxiliary function of the gradient variable that will be useful in the following.\\
The function $V_p:\R^{n}\to\R^{n}$, defined as

\begin{equation*}\label{Vp}
	V_p(\xi):=\left(\mu^2+\left|\xi\right|^2\right)^\frac{p-2}{4}\xi,
\end{equation*}

\noindent for which the following estimates hold (see \cite{AF}).

\begin{lemma}\label{lemma6GP}
	Let $1<p<2$. There is a constant $c=c(n, p)>0$ such that
	
	\begin{equation}\label{lemma6GPestimate1}
		c^{-1}\left|\xi-\eta\right|\le\left|V
		_p(\xi)-V_p(\eta)\right|\cdot\left(\mu^2+\left|\xi\right|^2+\left|\eta\right|^2\right)^\frac{2-p}{4}\le c\left|\xi-\eta\right|,
	\end{equation}
	\noindent for any $\xi, \eta\in\R^n.$
\end{lemma}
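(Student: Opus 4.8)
\textbf{Proof strategy for Lemma \ref{lemma6GP}.}

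The plan is to reduce the two-sided bound to a single scalar inequality by exploiting homogeneity and the rotational structure of $V_p$. First, I would observe that by a rotation of coordinates we may assume $\xi$ and $\eta$ lie in a fixed two-dimensional plane, and then by the usual rescaling $\xi \mapsto \xi/\sqrt{\mu^2+\cdots}$ one reduces (when $\mu>0$) to the case $\mu=1$, while the case $\mu=0$ follows by continuity/homogeneity. The core of the argument is the representation
$$
V_p(\xi)-V_p(\eta) = \int_0^1 \frac{d}{dt}V_p\bigl(\eta + t(\xi-\eta)\bigr)\,dt = \int_0^1 D V_p\bigl(\gamma(t)\bigr)\,dt\,(\xi-\eta),
$$
where $\gamma(t)=\eta+t(\xi-\eta)$, together with the explicit computation of the Jacobian
$$
D V_p(\zeta) = \left(\mu^2+|\zeta|^2\right)^{\frac{p-2}{4}}\left(\mathrm{Id} + \frac{p-2}{2}\,\frac{\zeta\otimes\zeta}{\mu^2+|\zeta|^2}\right).
$$

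Next I would establish that, for $1<p<2$, the symmetric matrix $D V_p(\zeta)$ has eigenvalues $\left(\mu^2+|\zeta|^2\right)^{\frac{p-2}{4}}$ (on the orthogonal complement of $\zeta$) and $\frac{p}{2}\left(\mu^2+|\zeta|^2\right)^{\frac{p-2}{4}}$ (in the direction of $\zeta$); since $\frac{p}{2}\in\bigl(\tfrac12,1\bigr)$, this yields the two-sided bound
$$
\tfrac{p}{2}\left(\mu^2+|\zeta|^2\right)^{\frac{p-2}{4}}|w| \;\le\; |D V_p(\zeta)\,w| \;\le\; \left(\mu^2+|\zeta|^2\right)^{\frac{p-2}{4}}|w|
$$
for every $w\in\R^n$. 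Plugging this into the integral representation and using that along the segment $\gamma(t)$ one has $|\gamma(t)|\le |\xi|+|\eta|$, one immediately gets the upper bound: $|V_p(\xi)-V_p(\eta)|\le \left(\mu^2+|\xi|^2+|\eta|^2\right)^{\frac{p-2}{4}}|\xi-\eta|$ up to a dimensional constant, which is exactly the right-hand inequality in \eqref{lemma6GPestimate1} after multiplying by $\left(\mu^2+|\xi|^2+|\eta|^2\right)^{\frac{2-p}{4}}$.

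The main obstacle is the lower bound, because the naive estimate $|\gamma(t)|\le |\xi|+|\eta|$ is useless there — it would put $\left(\mu^2+|\gamma(t)|^2\right)^{\frac{p-2}{4}}$ in the numerator on the wrong side. The remedy is the standard trick: one shows that $\mu^2+|\gamma(t)|^2 \le c\,\bigl(\mu^2+|\xi|^2+|\eta|^2\bigr)$ is \emph{not} what is needed; instead one needs, for the lower bound, a uniform comparison from \emph{below}, namely that for $t$ in a set of measure bounded away from $0$ one has $\mu^2+|\gamma(t)|^2 \ge c\,(\mu^2+|\xi|^2+|\eta|^2)$ — which is false in general — so one argues differently. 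The correct route, which I would follow, is to split into two cases according to whether $|\xi-\eta|\le \tfrac12\bigl(\mu^2+|\xi|^2+|\eta|^2\bigr)^{1/2}$ or not. In the first (``close'') case, the whole segment stays in a region where $\mu^2+|\gamma(t)|^2\simeq \mu^2+|\xi|^2+|\eta|^2$, so the eigenvalue bound with the constant $\tfrac p2$ gives the lower estimate directly. In the second (``far apart'') case one instead uses the monotonicity-type inequality $\langle V_p(\xi)-V_p(\eta),\xi-\eta\rangle \ge c\,|V_p(\xi)-V_p(\eta)|^2$ (or, more elementarily, that $|V_p(\xi)|\simeq \left(\mu^2+|\xi|^2\right)^{(p-2)/4}|\xi|$ is comparable to $\left(\mu^2+|\xi|^2\right)^{p/4}$ which is increasing in $|\xi|$), together with the crude bound $\left(\mu^2+|\xi|^2+|\eta|^2\right)^{\frac{2-p}{4}}\simeq \left(\mu^2+\max(|\xi|,|\eta|)^2\right)^{\frac{2-p}{4}}$, to conclude. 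Assembling the two cases and absorbing all constants into a single $c=c(n,p)$ gives the claimed two-sided inequality; since the statement is a known one (attributed to \cite{AF}), no genuinely new difficulty arises, and the write-up can be kept short by citing the eigenvalue computation and the elementary case split.
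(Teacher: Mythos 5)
The paper does not prove this lemma at all: it is quoted verbatim from Acerbi--Fusco \cite{AF}, so there is no internal argument to compare against. Judged on its own terms, your proposal has the right skeleton (integral representation along the segment $\gamma(t)=\eta+t(\xi-\eta)$, the Jacobian $DV_p(\zeta)$ and its eigenvalue bounds, reduction to comparing $\int_0^1(\mu^2+|\gamma(t)|^2)^{\frac{p-2}{4}}dt$ with $(\mu^2+|\xi|^2+|\eta|^2)^{\frac{p-2}{4}}$), but it contains a genuine direction error: you have swapped which of the two inequalities is immediate and which requires work. Since $\frac{p-2}{4}<0$, the pointwise bound $|\gamma(t)|\le\max(|\xi|,|\eta|)$ gives a \emph{lower} bound on the weight, $(\mu^2+|\gamma(t)|^2)^{\frac{p-2}{4}}\ge(\mu^2+|\xi|^2+|\eta|^2)^{\frac{p-2}{4}}$, and hence (via the minimum-eigenvalue estimate $\tfrac p2$) the \emph{left} inequality of \eqref{lemma6GPestimate1} for free. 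It cannot "immediately give the upper bound" as you claim. The right inequality is the delicate one: there you must bound $\int_0^1(\mu^2+|\gamma(t)|^2)^{\frac{p-2}{4}}dt$ from \emph{above}, and the obstruction is that the segment may pass arbitrarily close to the origin (e.g.\ $\eta=-\xi$, $\mu=0$), where the weight blows up; one then uses that the exponent $\frac{p-2}{2}>-1$ makes the singularity integrable, via exactly the case split you describe ($|\xi-\eta|$ small or large compared with $(\mu^2+|\xi|^2+|\eta|^2)^{1/2}$, the "far" case being settled by $|V_p(\zeta)|\le(\mu^2+|\zeta|^2)^{p/4}$).

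So the ingredients of a correct proof are all present in your write-up, but the labels "upper bound" and "lower bound" must be interchanged throughout: the paragraph beginning "The main obstacle is the lower bound" is spent repairing the inequality that was already proved in one line, while the inequality you dismissed as immediate is the one that genuinely needs the case split. A smaller inaccuracy: the eigenvalue of $DV_p(\zeta)$ in the direction of $\zeta$ is $\bigl(1+\frac{p-2}{2}\frac{|\zeta|^2}{\mu^2+|\zeta|^2}\bigr)(\mu^2+|\zeta|^2)^{\frac{p-2}{4}}$, which equals $\frac p2(\mu^2+|\zeta|^2)^{\frac{p-2}{4}}$ only when $\mu=0$; it lies between $\frac p2$ and $1$ times the weight in general, so the two-sided bound you state still holds, but the eigenvalue itself is not what you wrote.
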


\begin{remark}\label{rmk1}
	One can easily check that, for a $C^2$ function $v$, there is a constant $C(p)$ such that
		\begin{equation}\label{lemma6GPestimate2}
			C^{-1}\left|D^2v\right|^2\left(\mu^2+\left|Dv\right|^2\right)^\frac{p-2}{2}\le\left|D\left[V_p\left(Dv\right)\right]\right|^2\le C\left|D^2v\right|^2\left(\mu^2+\left|Dv\right|^2\right)^\frac{p-2}{2}.
	\end{equation}
\end{remark}

In what follows, the following result can be useful.

\begin{lemma}\label{differentiabilitylemma}
	Let $\Omega\subset\R^n$ be a bounded open set, $1<p<2$, and $v\in W^{1, p}_ {\loc}\left(\Omega, \R^N\right)$. Then the following implication holds
	\begin{equation*}\label{differentiabilityimplication}
		V_p\left(Dv\right)\in W^{1,2}_{\loc}\left(\Omega\right) \implies v\in W^{2,p}_{\loc}\left(\Omega\right),
	\end{equation*}
	
	and the following estimate
	
	\begin{equation}\label{differentiabilityestimate}
		\int_{B_{r}}\left|D^2v(x)\right|^pdx
		\le c\cdot \left[1+\int_{B_{R}}\left|DV_p\left(Dv(x)\right)\right|^2dx+c\int_{B_R}\left|Dv(x)\right|^pdx\right]
	\end{equation}
	
	holds for any ball $B_R\Subset\Omega$ and $0<r<R$.
\end{lemma}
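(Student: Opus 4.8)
The plan is to deduce pointwise control of $|D^2 v|^p$ from the $W^{1,2}$ regularity of $V_p(Dv)$ by inverting the lower bound in Remark~\ref{rmk1} and then handling the degenerate weight $(\mu^2+|Dv|^2)^{(p-2)/2}$, which blows up where $Dv$ is small, via Young's inequality. First I would observe that since $V_p(Dv)\in W^{1,2}_{\loc}(\Omega)$, by Lemma~\ref{differentiabilitylemma}'s hypothesis together with Lemma~\ref{lemma6GP} applied to difference quotients of $Dv$ (or, more directly, by a density/mollification argument reducing to the $C^2$ case so that Remark~\ref{rmk1} applies), one has, at least formally, the second-order derivative $D^2v$ existing in $L^p_{\loc}$; the rigorous route is to work with a smooth approximation $v_\e$, establish the estimate \eqref{differentiabilityestimate} uniformly in $\e$, and pass to the limit, so the heart of the matter is the a priori estimate.

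The key inequality is the left-hand side of \eqref{lemma6GPestimate2}, which gives
\begin{equation*}
	|D^2v|^2 \le C\,|D[V_p(Dv)]|^2\,(\mu^2+|Dv|^2)^{\frac{2-p}{2}}
\end{equation*}
pointwise (for a $C^2$ function, hence for the approximants $v_\e$). Raising to the power $p/2$, I would write
\begin{equation*}
	|D^2v|^p \le C\,|D[V_p(Dv)]|^p\,(\mu^2+|Dv|^2)^{\frac{(2-p)p}{4}},
\end{equation*}
and then apply Young's inequality with the conjugate exponents $\frac{2}{p}$ and $\frac{2}{2-p}$ to the product on the right: the first factor $|D[V_p(Dv)]|^p$ pairs with exponent $2/p$ to yield $|D[V_p(Dv)]|^2$, while the second factor $(\mu^2+|Dv|^2)^{\frac{(2-p)p}{4}}$ pairs with exponent $\frac{2}{2-p}$ to yield $(\mu^2+|Dv|^2)^{\frac{p}{2}}\le c(\mu^2+|Dv|^p)\le c(1+|Dv|^p)$ since $\mu\le 1$. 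Integrating over $B_r$ (with $r<R$, so $B_r\Subset\Omega$ and all quantities are finite) produces exactly \eqref{differentiabilityestimate}, with the constant $c$ depending only on $n$, $p$ and the constant $C$ from Remark~\ref{rmk1}.

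The main obstacle is purely technical rather than conceptual: making the use of Remark~\ref{rmk1} legitimate for a function $v$ that is only assumed to be in $W^{1,p}_{\loc}$ with $V_p(Dv)\in W^{1,2}_{\loc}$, since \eqref{lemma6GPestimate2} is stated for $C^2$ maps. I would resolve this by mollifying, $v_\e=v*\rho_\e$, noting $Dv_\e\to Dv$ in $L^p_{\loc}$ and $V_p(Dv_\e)=V_p(Dv*\rho_\e)$ is bounded in $W^{1,2}_{\loc}$ uniformly in $\e$ (using that $\xi\mapsto V_p(\xi)$ together with \eqref{lemma6GPestimate1} controls convolutions), applying the displayed chain of inequalities to each $v_\e$, and then using lower semicontinuity of the $L^p$ norm of $D^2v_\e$ along a weakly convergent subsequence to obtain both that $v\in W^{2,p}_{\loc}(\Omega)$ and the estimate \eqref{differentiabilityestimate} for $v$ itself. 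One should also record that the passage $B_r \subset B_R$ is used only to keep the right-hand side finite, and that the additive $1$ in \eqref{differentiabilityestimate} absorbs the contribution of the $\mu^2$ term on sets of finite measure.
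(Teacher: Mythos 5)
Your derivation of the estimate itself is fine: once one knows that $D^2v$ exists and that the chain rule $D\left[V_p(Dv)\right]=DV_p(Dv)\,D^2v$ holds, the pointwise inequality from Remark \ref{rmk1} raised to the power $p/2$ followed by Young's inequality with exponents $\left(\frac{2}{p},\frac{2}{2-p}\right)$ gives exactly \eqref{differentiabilityestimate}. The problem is that you have pushed the entire difficulty into the approximation step, and the one claim that carries all the weight there is not justified and, as stated, does not follow from the tools you invoke. You assert that $V_p(Dv_\e)$, with $v_\e=v\ast\rho_\e$, is bounded in $W^{1,2}_{\loc}$ uniformly in $\e$ ``using that $\xi\mapsto V_p(\xi)$ together with \eqref{lemma6GPestimate1} controls convolutions.'' But $V_p$ is nonlinear, so $V_p(Dv\ast\rho_\e)\neq V_p(Dv)\ast\rho_\e$, and you cannot write $D^2v_\e=(D^2v)\ast\rho_\e$ because the existence of $D^2v$ is precisely what is to be proved; all you have is $D^2v_\e=Dv\ast D\rho_\e$. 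Trying to estimate $\left|D\left[V_p(Dv_\e)\right]\right|$ via \eqref{lemma6GPestimate1} forces you to compare the weight $\left(\mu^2+|Dv_\e(x)|^2\right)^{\frac{p-2}{4}}$ (a negative power evaluated at the averaged gradient) with the weights $\left(\mu^2+|Dv(x-y)|^2\right)^{\frac{2-p}{4}}$ inside the convolution; these are not comparable pointwise (and the weight is degenerate when $\mu=0$), so no uniform bound comes out. Without that uniform bound there is no weakly convergent subsequence of $D^2v_\e$ and the lower-semicontinuity step has nothing to act on.

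The standard way to close this gap --- and the route the paper takes --- is the difference quotient method, which sidesteps mollification entirely because translations commute with everything. Since $V_p(Dv)\in W^{1,2}_{\loc}$, Lemma \ref{le1} gives $\int_{B_r}|\tau_{s,h}V_p(Dv)|^2\,dx\le c|h|^2\int_{B_R}|DV_p(Dv)|^2\,dx$. Writing $|\tau_{s,h}Dv|^p$ with the weight $\left(\mu^2+|Dv(x+he_s)|^2+|Dv(x)|^2\right)^{\pm\frac{p(p-2)}{4}}$ inserted, applying H\"older with exponents $\left(\frac{2}{p},\frac{2}{2-p}\right)$ and the two-point inequality \eqref{lemma6GPestimate1} (which is exactly the finite-difference surrogate for Remark \ref{rmk1}), one gets $\int_{B_r}|\tau_{s,h}Dv|^p\,dx\le c|h|^p\left[1+\int_{B_R}|DV_p(Dv)|^2\,dx+\int_{B_R}|Dv|^p\,dx\right]$, and Lemma \ref{Giusti8.2} then yields both $v\in W^{2,p}_{\loc}(\Omega)$ and the estimate \eqref{differentiabilityestimate}. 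You should replace your mollification step with this argument (or supply an actual proof of the uniform $W^{1,2}_{\loc}$ bound for $V_p(Dv_\e)$, which I do not see how to do without essentially running the difference quotient argument anyway).
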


\begin{proof}
	We will prove the existence of the second-order weak derivatives of $v$ and the fact that they are in $L^p_{\loc}\left(\Omega\right)$, by means of the difference quotients method.\\
	Let us consider a ball $B_R\Subset\Omega$ and $0<\frac{R}{2}<r<R$.\\
	
	For $|h|<\frac{R-r}{2}$, we have $0<\frac{R}{2}<r<\rho_1:=r+|h|<R-|h|=:\rho_2<R$, and by \eqref{lemma6GPestimate1}, we get, for any $s=1, ..., n.$
	\begin{eqnarray*}
		\int_{B_r}\left|\tau_{s, h}Dv(x)\right|^pdx&=& \int_{B_r}|\tau_{s, h}Dv(x)|^p\left(\mu^2+\left|Dv\left(x+he_s\right)\right|^2+\left|Dv\left(x\right)\right|^2\right)^\frac{p(p-2)}{4}\cr\cr
		&&\cdot\left(\mu^2+\left|Dv\left(x+he_s\right)\right|^2+\left|Dv\left(x\right)\right|^2\right)^\frac{p(2-p)}{4}dx.
	\end{eqnarray*}
	
	By H\"{o}lder's Inequality with exponents $\left(\frac{2}{p}, \frac{2}{2-p}\right)$ and the use of \eqref{lemma6GPestimate1}, we get
	
\begin{eqnarray*}
	\int_{B_r}\left|\tau_{s, h}Dv(x)\right|^pdx	&\le&\left(\int_{B_r}\left|\tau_{s, h}V_p\left(Dv(x)\right)\right|^2dx\right)^\frac{p}{2}\cr\cr
	&&\cdot\left(\int_{B_{r}}\left(\mu^2+\left|Dv\left(x+he_s\right)\right|^2+\left|Dv\left(x\right)\right|^2\right)^\frac{p}{2}dx\right)^\frac{2-p}{2},
\end{eqnarray*}
	
	and since $V_p\left(Dv\right)\in W^{1,2}_{\loc}\left(\Omega\right)$, by Lemma \ref{le1} and Young's Inequality, we have
	
\begin{eqnarray*}\label{lemmaestimate1}
	\int_{B_r}\left|\tau_{s, h}Dv(x)\right|^pdx&\le& c\left[|h|^2\int_{B_R}\left|DV_p\left(Dv(x)\right)\right|^2dx\right]^\frac{p}{2}\cr\cr
	&&\cdot\left[\int_{B_r}\left(\mu^2+\left|Dv\left(x+he_s\right)\right|^2+\left|Dv(x)\right|^2\right)^\frac{p}{2}dx\right]^\frac{2-p}{p}\cr\cr
	&\le&c|h|^p\left[1+\int_{B_{R}}\left|DV_p\left(Dv(x)\right)\right|^2dx+\int_{B_R}\left|Dv(x)\right|^pdx\right].
\end{eqnarray*}
	
	Since $v\in W^{1,p}_{\loc}\left(\Omega\right)$ and $V_p\left(Dv\right)\in W^{1,2}_{\loc}\left(\Omega\right)$, then, by Lemma \ref{Giusti8.2}, we get $v\in W^{2,p}_{\loc}\left(\Omega\right)$, and by Young's inequality, we have the estimate
	
	\begin{equation*}\label{differentiabilityestimate1}
		\int_{B_{r}}\left|D^2v(x)\right|^pdx
		\le c \left[1+\int_{B_{R}}\left|DV_p\left(Dv(x)\right)\right|^2dx+c\int_{B_R}\left|Dv(x)\right|^pdx\right],
	\end{equation*}
	
	that is \eqref{differentiabilityestimate}.
\end{proof}

\begin{remark}\label{rmk2}
	If $\Omega\subset\R^n$ is a bounded open set and $1<p<2$, then one may use Remark \ref{rmk1} and Lemma \ref{differentiabilitylemma} to show that, if $v\in W^{1, p}_{\loc}\left(\Omega\right)$ and $V_p\left(Dv\right)\in W^{1,2}_{\loc}\left(\Omega\right)$, then $v\in W^{2, p}_{\loc}\left(\Omega\right)$ and \eqref{lemma6GPestimate2} holds true.
\end{remark}
\begin{remark}\label{rmk3}	
	If $\Omega\subset\R^n$ is a bounded open set and $p\in\left(1, \infty\right)$, for any $v\in W^{1, p}_{\loc}\left(\Omega\right)$ such that $V_p\left(Dv\right)\in W^{1,2}_{\loc}\left(\Omega\right),$ if $m>1$ and $v\in L^{2m}_{\loc}\left(\Omega\right)$, then, thanks to \eqref{2.1GP}, $Dv\in L^{\frac{m\left(p+2\right)}{m+1}}_{\loc}\left(\Omega\right)$ and if $v\in L^{\infty}_{\loc}\left(\Omega\right)$, thanks to \eqref{2.2GP}, we get $Dv\in L^{p+2}_{\loc}\left(\Omega\right).$
\end{remark}
%
%

\subsection{Proof of Theorem \ref{thm1GG}}\label{Thm1GGPf}

\begin{proof}[Proof of the Theorem \ref{thm1GG}]
	{\bf Step 1: the a priori estimate.}\\
	Let us observe that, if $V_p\left(D\psi\right)\in W^{1,2}_\loc\left(\Omega\right)$ then, by virtue of Remark \ref{rmk3} and estimate \eqref{2.2GP}, we get $D\psi\in L^{p+2}_\loc\left(\Omega\right)$.\\
	Suppose that $u\in\mathcal{K}_\psi\left(\Omega\right)$ is a solution to the obstacle problem \eqref{functionalobstacle} such that
	$$
	V_p\left(Du\right)\in
	W^{1,2}_\loc\left(\Omega\right).
	$$
	Our first step is to prove the following a priori estimate
	
	\begin{eqnarray}\label{aprioriestimateCLAIMGG}
		&&\int_{B_{\frac{R}{2}}}\left|DV_p\left(Du(x)\right)\right|^2dx\cr\cr
		&\le&\frac{c\left(\left\Arrowvert \psi\right\Arrowvert_{L^\infty\left(B_{8R}\right)}^2+\left\Arrowvert u\right\Arrowvert_{L^{p^*}\left(B_{8R}\right)}^2\right)^{\sigma_1}}{R^2}\cr\cr
		&&\cdot\left[\int_{B_{4R}}\left(\mu^2+\left|Du(x)\right|^2\right)^\frac{p}{2}dx+\int_{B_{4R}}g^{p+2}(x)dx\right.\cr\cr
		&&\left.+\int_{B_{4R}} \left|DV_p\left(D\psi(x)\right)\right|^2dx+\int_{B_{4R}}\left(\mu^2+\left|D\psi(x)\right|^2\right)^\frac{p}{2}dx\right]^{\sigma_2},
	\end{eqnarray}
	for any ball $B_{8R}\Subset\Omega$.\\
	
	By estimate \eqref{est boundedness}, since $\psi\in L^\infty_{\loc}\left(\Omega\right)$, we have $u\in L^\infty_{\loc}\left(\Omega\right)$.\\
Recalling Remarks \ref{rmk2} and \ref{rmk3}, and Lemma \ref{lemma5GP}, thanks to the a priori assumption $V_p\left(Du\right)\in W^{1,2}_\loc\left(\Omega\right)$, we have $u\in W^{2, p}_\loc\left(\Omega\right)$ and $Du\in L^{p+2}_\loc\left(\Omega\right)$.\\
In order to apply Theorem \ref{Fuchs}, let us recall that $u\in W^{1,p}_{\loc}\left(\Omega\right)$ is a solution to the equation \eqref{diveq} if and only if, for any $\varphi\in W^{1,p}_0\left(\Omega\right)$,

\begin{equation}\label{diveqtestGG}
	\int_{\Omega}\left<A\left(x, Du(x)\right), D\varphi(x)\right>dx=-\int_{\Omega}\div A\left(x, D\psi(x)\right)\chi_{\set{u=\psi}}(x)\varphi(x)dx.
\end{equation}

Let us fix a ball $B_{8R}\Subset \Omega$ and arbitrary radii $\frac{R}{2}\le r<\tilde{s}<t<\tilde{t}<\lambda r,$ with $1<\lambda<2$. Let us consider a cut-off function $\eta\in C^\infty_0\left(B_t\right)$ such that $\eta\equiv 1$ on $B_{\tilde{s}}$ and $\left|D\eta\right|\le \frac{c}{t-{\tilde{s}}}$. From now on, with no loss of generality, we suppose $R<\frac{1}{4}$.\\

For any $s=1,\dots, n$ and $h\in\R$ with $\left|h\right|$ is sufficiently small, let us consider the test function
$$
\varphi=\tau_{s, -h}\left(\eta^2\cdot\tau_{s, h}u\right).
$$
	
	For this choice of $\varphi$, using Proposition \ref{findiffpr}, the left-hand side of \eqref{diveqtestGG} can be written as follows:
	
	\begin{eqnarray}\label{diveqtestLGG}
		&&\int_{\Omega}\left<A\left(x, Du(x)\right), D\left(\tau_{-h}\left(\eta^2(x)\tau_hu(x)\right)\right)\right>dx\cr\cr
		&=&\int_{\Omega}\left<\tau_{h}A\left(x, Du(x)\right), D\left(\eta^2(x)\tau_hu(x)\right)\right>dx\cr\cr
		&=&\int_{\Omega}\left<A\left(x+h, Du(x+h)\right)-A\left(x, Du(x)\right), D\left(\eta^2(x)\tau_hu(x)\right)\right>dx\cr\cr
		&=&\int_{\Omega}\left<A\left(x+h, Du(x+h)\right)-A\left(x, Du(x)\right), \eta^2(x)\tau_hDu(x)\right>dx\cr\cr
		&&+\int_{\Omega}\left<A\left(x+h, Du(x+h)\right)-A\left(x, Du(x)\right), 2\eta(x)D\eta(x)\tau_hu(x)\right>dx\cr\cr
		&=&\int_{\Omega}\left<A\left(x, Du(x+h)\right)-A\left(x, Du(x)\right), \eta^2(x)\tau_hDu(x)\right>dx\cr\cr
		&&+\int_{\Omega}\left<A\left(x+h, Du(x+h)\right)-A\left(x, Du(x+h)\right), \eta^2(x)\tau_hDu(x)\right>dx\cr\cr
		&&+\int_{\Omega}\left<A\left(x+h, Du(x+h)\right)-A\left(x, Du(x)\right), 2\eta(x)D\eta(x)\tau_hu(x)\right>dx\cr\cr
		&:=&I_0+I+II,
	\end{eqnarray}
	
	where, for the finite differences, we used the simplified notation
		$$\tau_{h}F\left(x\right)=F(x+h)-F(x),$$
		with $h\in\R^n$, in place of
		$$\tau_{s, h}F\left(x\right)=F\left(x+he_s\right)-F(x),$$
		with $h\in\R$ and, in the following, we will specify the direction only if it will be necessary.\\
	Since the right-hand side of \eqref{diveqtestGG} is not zero only where $u=\psi$, using the test function given above, it becomes
	
	\begin{equation}\label{diveqtestRGG*}
		-\int_{\Omega}\div A\left(x, D\psi(x)\right)\chi_{\set{u=\psi}}(x)\tau_{-h}\left(\eta^2(x)\tau_{h}\psi(x)\right)dx,
	\end{equation}
	
	and since the map $x\mapsto A(x, \xi)$ belongs to $W^{1,p+2}_{\loc}\left(\Omega\right)$ for any $\xi\in\R^n$, the map $\xi\mapsto A(x, \xi)$ belongs to $C^1(\R^n)$ for a.e. $x\in\Omega$ and $V_p\left(D\psi\right)\in W^{1,2}_{\loc}\left(\Omega\right)$, we can write \eqref{diveqtestRGG*} as follows
	
		\begin{eqnarray*}\label{diveqtestR}
	&&-\int_{\Omega}\Big\lbrace \Big[A_x\left(x, D\psi(x)\right)+A_\xi\left(x, D\psi(x)\right)D^2\psi(x)\Big]\chi_{\set{u=\psi}}(x)\cr\cr
	&&\cdot\tau_{-h}\left(\eta^2(x)\tau_h\psi(x)\right)\Big\rbrace dx\cr\cr
	&=&-\int_{\Omega}\Bigg\lbrace \Big[A_x\left(x, D\psi(x)\right)+A_\xi\left(x, D\psi(x)\right)D^2\psi(x)\Big]\chi_{\set{u=\psi}}(x)\cr\cr
	&&\cdot\tau_{-h}\left(\eta^2(x)\cdot h\int_{0}^{1}D\psi(x+\sigma h)d\sigma\right)\Bigg\rbrace dx\cr\cr
	&=&-\int_{\Omega}\Bigg\lbrace \Big[A_x\left(x, D\psi(x)\right)+A_\xi\left(x, D\psi(x)\right)D^2\psi(x)\Big]\chi_{\set{u=\psi}}(x)\cr\cr
	&&\cdot \left|h\right|^2\int_{0}^{1}\left[\eta^2(x-\theta h)\int_{0}^{1}D^2\psi(x+\sigma h-\theta h)d\sigma\right.\cr\cr
	&&\left.+2\eta(x-\theta h)D\eta(x-\theta h)\int_{0}^{1}D\psi(x+\sigma h-\theta h)d\sigma\right]d\theta\Bigg\rbrace dx\cr\cr
	&=&-\int_{\Omega}\Bigg\lbrace \Big[A_x\left(x, D\psi(x)\right)+A_\xi\left(x, D\psi(x)\right)D^2\psi(x)\Big]\chi_{\set{u=\psi}}(x)\cr\cr
	&&\cdot\int_{0}^{1}\int_{0}^{1}\left|h\right|^2\Big[\eta^2(x-\theta h)D^2\psi(x+\sigma h-\theta h)\cr\cr
	&&+2\eta(x-\theta h)D\eta(x-\theta h)D\psi(x+\sigma h-\theta h)\Big]d\sigma d\theta\Bigg\rbrace dx.
\end{eqnarray*}

Therefore, the right-hand side of \eqref{diveqtestGG} is given by the following expression

\begin{eqnarray}\label{rhs}
	&&-\left|h\right|^2\int_{\Omega}A_x\left(x, D\psi(x)\right)\chi_{\set{u=\psi}}(x)\int_{0}^{1}\int_{0}^{1}\eta^2(x-\theta h)D^2\psi(x+\sigma h-\theta h)d\sigma d\theta dx\cr\cr
	&&-2\left|h\right|^2\int_{\Omega}A_x\left(x, D\psi(x)\right)\chi_{\set{u=\psi}}(x)\int_{0}^{1}\int_{0}^{1}\eta(x-\theta h)D\eta(x-\theta h)D\psi(x+\sigma h-\theta h)d\sigma d\theta dx\cr\cr
	&&-\left|h\right|^2\int_{\Omega}A_\xi\left(x, D\psi(x)\right)D^2\psi(x)\chi_{\set{u=\psi}}(x)\int_{0}^{1}\int_{0}^{1}\eta^2(x-\theta h)D^2\psi(x+\sigma h-\theta h)d\sigma d\theta dx\cr\cr
	&&-2\left|h\right|^2\int_{\Omega}A_\xi\left(x, D\psi(x)\right)D^2\psi(x)\chi_{\set{u=\psi}}(x)\cr\cr
	&&\cdot\int_{0}^{1}\int_{0}^{1}\eta(x-\theta h)D\eta(x-\theta h)D\psi(x+\sigma h-\theta h)d\sigma d\theta dx\cr\cr
	&=:&-III-IV-V-VI.
\end{eqnarray}

Inserting \eqref{diveqtestLGG} and \eqref{rhs} in \eqref{diveqtestGG} we get
	
	\begin{equation}\label{starteqGG}
		I_0=-I-II-III-IV-V-VI,
	\end{equation}
	and so

\begin{equation}\label{startGG}
	I_0\le\left|I\right|+\left|II\right|+\left|III\right|+\left|IV\right|+\left|V\right|+\left|VI\right|.
\end{equation}

By assumption \eqref{obstacleA1}, we have

\begin{equation}\label{I_0GG}
	I_0\ge\nu\int_{\Omega}\eta^2(x)\left(\mu^2+\left|Du(x)\right|^2+\left|Du(x+h)\right|^2\right)^{\frac{p-2}{2}}\left|\tau_{h}Du(x)\right|^2dx.
\end{equation}

Let us consider the term $I$. By assumption \eqref{x-dependenceH}, and using Young's inequality with exponents $\left(2, 2\right)$, H\"{o}lder's inequality with exponents $\left(\frac{p+2}{p}, \frac{p+2}{2}\right)$, and the properties of $\eta$, we get

\begin{eqnarray}\label{IGG}
	\left|I\right|&\le&\int_{\Omega}|h|\left(g(x+h)+g(x)\right)\left(\mu^2+\left|Du(x)\right|^2+\left|Du(x+h)\right|^2\right)^{\frac{p-1}{2}}\eta^2(x)\left|\tau_{h}Du(x)\right|dx\cr\cr
	&\le&\varepsilon\int_{\Omega}\eta^2(x)\left(\mu^2+\left|Du(x)\right|^2+\left|Du(x+h)\right|^2\right)^{\frac{p-2}{2}}\left|\tau_{h}Du(x)\right|^2dx\cr\cr
	&&+c_\varepsilon|h|^2\int_{\Omega}\eta^2(x)\left(\mu^2+\left|Du(x)\right|^2+\left|Du(x+h)\right|^2\right)^{\frac{p}{2}}\left(g(x+h)+g(x)\right)^2dx\cr\cr
	&\le&\varepsilon\int_{\Omega}\eta^2(x)\left(\mu^2+\left|Du(x)\right|^2+\left|Du(x+h)\right|^2\right)^{\frac{p-2}{2}}\left|\tau_{h}Du(x)\right|^2dx\cr\cr
	&&+c_\varepsilon|h|^2\left(\int_{B_t}\left(\mu^2+\left|Du(x)\right|^2+\left|Du(x+h)\right|^2\right)^{\frac{p+2}{2}}dx\right)^\frac{p}{p+2}\cr\cr
	&&\cdot\left(\int_{B_{\lambda r}}g^{p+2}(x)dx\right)^\frac{2}{p+2}\cr\cr
	&\le&\varepsilon\int_{\Omega}\eta^2(x)\left(\mu^2+\left|Du(x)\right|^2+\left|Du(x+h)\right|^2\right)^{\frac{p-2}{2}}\left|\tau_{h}Du(x)\right|^2dx\cr\cr
	&&+c_\varepsilon|h|^2\left(\int_{B_{\tilde{t}}}\left(\mu^2+\left|Du(x)\right|^2\right)^{\frac{p+2}{2}}dx\right)^\frac{p}{p+2}\cdot\left(\int_{B_{\lambda r}}g^{p+2}(x)dx\right)^\frac{2}{p+2},
\end{eqnarray}

where we also used Lemma \ref{le1}.\\
Let us consider the term $II$. If we denote again finite differences with respect to a precise direction $s=1, \dots, n$, with an integration by parts, we have

\begin{eqnarray*}\label{-IIGG}
	&&-II=-2h\int_{\Omega}\left<\int_{0}^{1}\frac{d}{dx_s}A\left(x+\theta h e_s, Du(x+\theta h e_s)\right)d\theta, \eta(x)D\eta(x)\tau_{s, h}u(x)\right>dx\cr\cr
	&=&2h\int_{\Omega}\left<\int_{0}^{1}\left(A\left(x+\theta h e_s, Du(x+\theta h e_s)\right)\right)d\theta, \frac{d}{dx_s}\left(\eta(x)D\eta(x)\tau_{s,h}u(x)\right)\right>dx,
\end{eqnarray*}

where, for $s=1, \dots, n$, $e_s$ is the unit vector in the $x_s$ direction, and now $h\in\R$.\\
So we can estimate $II$ as follows

\begin{eqnarray*}\label{II*GG}
	\left|II\right|&\le&2|h|\int_{\Omega}\int_{0}^{1}\left|A\left(x+\theta h e_s, Du\left(x+\theta h e_s\right)\right)\right|\cr\cr
	&&\cdot\left(\left|D\eta(x)\right|^2\left|\tau_{s,h}u(x)\right|+\eta(x)\left|D^2\eta(x)\right|\left|\tau_{s,h}u(x)\right|\right)d\theta dx\cr\cr
	&&+2|h|\int_{\Omega}\int_{0}^{1}\left|A\left(x+\theta h e_s, Du\left(x+\theta h e_s\right)\right)\right|\cr\cr
	&&\cdot\left(\eta(x)\left|D\eta(x)\right|\left|\tau_{s,h}Du(x)\right|\right)d\theta dx\cr\cr
	&\le&2|h|\int_{\Omega}\int_{0}^{1}\left|A\left(x+\theta h e_s, Du\left(x+\theta h e_s\right)\right)\right|\cr\cr
	&&\cdot\left(\left|D\eta(x)\right|^2+\eta(x)\left|D^2\eta(x)\right|\right)d\theta \left|\tau_{s,h}u(x)\right|dx\cr\cr
	&&+2|h|\int_{\Omega}\int_{0}^{1}\left|A\left(x+\theta h e_s, Du\left(x+h_s\theta e_s\right)\right)\right|\cr\cr
	&&\cdot\eta(x)\left|D\eta(x)\right|\left|\tau_{s,h}Du(x)\right|d\theta dx.
\end{eqnarray*}

Now, recalling the properties of $\eta$, assumption \eqref{obstacleA3}, and using H\"{o}lder's inequality with exponents $\left(p, \frac{p}{p-1}\right)$, Lemma \ref{le1} and Young's inequality with exponents $\left(2, 2\right)$, we get

\begin{eqnarray*}\label{II**GG}
	\left|II\right|&\le&2c|h|\int_{B_t}\int_{0}^{1}\left(\mu^2+\left|Du(x)\right|^2+\left|Du\left(x+\theta h e_s\right)\right|^2\right)^\frac{p-1}{2}\cr\cr
	&&\cdot\left(\left|D\eta(x)\right|^2+\eta(x)\left|D^2\eta(x)\right|\right)d\theta\left|\tau_{s, h}u(x)\right|dx\cr\cr
	&&+2c|h|\int_{\Omega}\int_{0}^{1}\left(\mu^2+\left|Du(x)\right|^2+\left|Du\left(x+\theta h e_s\right)\right|^2\right)^\frac{p-1}{2}\cr\cr
	&&\cdot\eta(x)\left|D\eta(x)\right|\left|\tau_{s, h}Du(x)\right|d\theta dx\cr\cr
	&=&2c|h|\int_{0}^{1}\int_{B_t}\left(\mu^2+\left|Du(x)\right|^2+\left|Du\left(x+\theta h e_s\right)\right|^2\right)^\frac{p-1}{2}\cr\cr
	&&\cdot\left(\left|D\eta(x)\right|^2+\eta(x)\left|D^2\eta(x)\right|\right)\left|\tau_{s,h}u(x)\right|dxd\theta\cr\cr
	&&+2c|h|\int_{0}^{1}\int_{\Omega}\left(\mu^2+\left|Du(x)\right|^2+\left|Du\left(x+\theta h e_s\right)\right|^2\right)^\frac{p-1}{2}\cr\cr
	&&\cdot\eta(x)\left|D\eta(x)\right|\left|\tau_{s,h}Du(x)\right|dxd\theta\cr\cr
	&\le&\frac{c|h|}{\left(t-\tilde{s}\right)^2}\int_0^1\left(\int_{B_t}\left(\mu^2+\left|Du(x)\right|^2+\left|Du\left(x+\theta h e_s\right)\right|^2\right)^\frac{p}{2}dx\right)^\frac{p-1}{p}d\theta\cr\cr
	&&\cdot\left(\int_{B_t}\left|\tau_{s, h}u(x)\right|^pdx\right)^\frac{1}{p}\cr\cr
	&&+\varepsilon\int_{\Omega}\eta^2(x)\left|\tau_{s, h}Du(x)\right|^2\left(\mu^2+\left|Du(x)\right|^2+\left|Du(x+he_s)\right|^2\right)^\frac{p-2}{2}dx\cr\cr
	&&+\frac{c_\varepsilon\left|h\right|^2}{\left(t-\tilde{s}\right)^2}\int_{0}^{1}\int_{B_t}\left(\mu^2+\left|Du(x)\right|^2+\left|Du\left(x+\theta he_s	\right)\right|^2\right)^{p-1}\cr\cr
	&&\cdot\left(\mu^2+\left|Du(x)\right|^2+\left|Du\left(x+\theta he_s\right)\right|^2\right)^\frac{2-p}{2}dxd\theta.
\end{eqnarray*}

Now, if we use again the simplified notation for finite diferences, with $h\in\R^n$ in place of $he_s$ where $h\in\R$, by Lemma \ref{le1}, we get

\begin{eqnarray}\label{IIGG}
	\left|II\right|&\le&\frac{c|h|^2}{\left(t-\tilde{s}\right)^2}\int_0^1\left(\int_{B_t}\left(\mu^2+\left|Du(x)\right|^2+\left|Du(x+\theta h)\right|^2\right)^\frac{p}{2}dx\right)^\frac{p-1}{p}d\theta\cr\cr
	&&\cdot\left(\int_{B_{\tilde{t}}}\left|Du(x)\right|^pdx\right)^\frac{1}{p}\cr\cr
	&&+\varepsilon\int_{\Omega}\eta^2(x)\left|\tau_{h}Du(x)\right|^2\left(\mu^2+\left|Du(x)\right|^2+\left|Du(x+h)\right|^2\right)^\frac{p-2}{2}dx\cr\cr
	&&+\frac{c_\varepsilon\left|h\right|^2}{\left(t-\tilde{s}\right)^2}\int_{0}^{1}\left[\int_{B_{\lambda r}}\left(\mu^2+\left|Du(x)\right|^2+\left|Du(x+\theta h)\right|^2\right)^\frac{p-1}{2}\right.\cr\cr
	&&\left.\cdot\left(\mu^2+\left|Du(x)\right|^2+\left|Du(x+\theta h)\right|^2\right)^\frac{2-p}{4}dx\right]^2d\theta.
\end{eqnarray}

Let us consider, now, the term $III$.
By \eqref{x-dependence} and the properties of $\eta$, we get

\begin{eqnarray*}\label{III*GG}
	|III|&\le&|h|^2\int_{0}^{1}\int_{0}^{1}\int_{B_{\lambda r}}g(x)\left(\mu^2+\left|D\psi(x)\right|^2+\left|D\psi(x+h)\right|^2\right)^\frac{p-1}{2}\cr\cr
	&&\cdot\left|D^2\psi(x+\sigma h-\theta h)\right|dxd\sigma d\theta.
\end{eqnarray*}

Using Young's inequality with exponents $\left(2, 2\right)$, we get

\begin{eqnarray}\label{III**GG}
	|III|&\le&c|h|^2\int_{0}^{1}\int_{0}^{1}\Bigg[\int_{B_{\lambda r}} g^2(x)\left(\mu^2+\left|D\psi(x)\right|^2+\left|D\psi(x+h)\right|^2\right)^\frac{p}{2}dx\cr\cr
	&&+\int_{B_{\lambda r}}\left(\mu^2+\left|D\psi(x)\right|^2+\left|D\psi(x+h)\right|^2\right)^\frac{p-2}{2}\cr\cr
	&&\cdot\left|D^2\psi(x+\sigma h-\theta h)\right|^2dx\Bigg]d\sigma d\theta.
\end{eqnarray}

Again by Young's inequality with exponents $\left(\frac{p+2}{2}, \frac{p+2}{p}\right)$ in the first integral of \eqref{III**GG}, we get

\begin{eqnarray}\label{IIIGG}
	\left|III\right|&\le&c|h|^2\left[\int_{B_{\lambda r}}g^{p+2}(x)dx+\int_{B_{\lambda r}}\left(\mu^2+\left|D\psi(x)\right|^2+\left|D\psi(x+h)\right|^2\right)^\frac{p+2}{2}dx\right]\cr\cr
	&&+c|h|^2\int_0^1\int_0^1\int_{B_{\lambda r}}\left(\mu^2+\left|D\psi(x)\right|^2+\left|D\psi(x+h)\right|^2\right)^\frac{p-2}{2}\cr\cr
	&&\cdot\left|D^2\psi(x+\sigma h-\theta h)\right|^2dxd\sigma d\theta.
\end{eqnarray}

By \eqref{x-dependence}, we can estimate the term $IV$, thus getting

\begin{eqnarray}\label{IVGG}
	\left|IV\right|&\le&2|h|^2\int_{B_{\lambda r}} g(x)\left(\mu^2+\left|D\psi(x)\right|^2+\left|D\psi(x+h)\right|^2\right)^\frac{p-1}{2}\cr\cr
	&&\cdot\int_{0}^{1}\int_{0}^{1}\left|D\psi(x+\sigma h-\theta h)\right|\left|D\eta(x-\theta h)\right|d\sigma d\theta dx.
\end{eqnarray}

Let us consider, now, the term $V$. By assumption \eqref{A2bis}, we get

\begin{eqnarray}\label{VGG}
	\left|V\right|&\le&|h|^2\int_{B_{\lambda r}} \left(\mu^2+\left|D\psi(x)\right|^2\right)^\frac{p-2}{2}\left|D^2\psi(x)\right|\cr\cr
	&&\cdot\int_{0}^{1}\int_{0}^{1}\left|D^2\psi(x+\sigma h-\theta h)\right|d\sigma d\theta dx.
\end{eqnarray}

Recalling \eqref{A2bis} again, we have

\begin{eqnarray}\label{VIGG}
	\left|VI\right|&\le&2|h|^2\int_{B_{\lambda r}}\left(\mu^2+\left|D\psi(x)\right|^2\right)^\frac{p-2}{2}\left|D^2\psi(x)\right|\cr\cr
	&&\cdot\int_{0}^{1}\int_{0}^{1}\left|D\eta(x-\theta h)\right|\left|D\psi(x+\sigma h-\theta h)\right|d\sigma d\theta dx.
\end{eqnarray}

Now, inserting \eqref{I_0GG}, \eqref{IGG}, \eqref{IIGG}, \eqref{IIIGG}, \eqref{IVGG}, \eqref{VGG} and \eqref{VIGG} in \eqref{startGG}, recalling the properties of $\eta$ and choosing a sufficiently small value of $\varepsilon$, we get

\begin{eqnarray}\label{full1GG}
	&&\int_{\Omega}\eta^2(x)\left(\mu^2+\left|Du(x)\right|^2+\left|Du(x+h)\right|^2\right)^{\frac{p-2}{2}}\left|\tau_{h}Du(x)\right|^2dx\cr\cr
	&\le&c|h|^2\left(\int_{B_{\tilde{t}}}\left(\mu^2+\left|Du(x)\right|^2+\left|Du(x+h)\right|^2\right)^{\frac{p+2}{2}}dx\right)^\frac{p}{p+2}\cdot\left(\int_{B_{\lambda r}}g^{p+2}(x)dx\right)^\frac{2}{p+2}\cr\cr
	&&+\frac{c|h|^2}{\left(t-\tilde{s}\right)^2}\int_0^1\left(\int_{B_t}\left(\mu^2+\left|Du(x)\right|^2+\left|Du(x+\theta h)\right|^2\right)^\frac{p}{2}dx\right)^\frac{p-1}{p}d\theta\cr\cr
	&&\cdot\left(\int_{B_{\tilde{t}}}\left|Du(x)\right|^pdx\right)^\frac{1}{p}\cr\cr
	&&+\frac{c\left|h\right|^2}{\left(t-\tilde{s}\right)^2}\int_{0}^{1}\int_{B_t}\left(\mu^2+\left|Du(x)\right|^2+\left|Du(x+\theta h)\right|^2\right)^{p-1}\cr\cr
	&&\cdot\left(\mu^2+\left|Du(x)\right|^2+\left|Du(x+\theta h)\right|^2\right)^\frac{2-p}{2}dxd\theta\cr\cr
	&&+c|h|^2\left[\int_{B_{\lambda r}}g^{p+2}(x)dx+\int_{B_{\lambda r}}\left(\mu^2+\left|D\psi(x)\right|^2+\left|D\psi(x+h)\right|^2\right)^\frac{p+2}{2}dx\right]\cr\cr
	&&+c|h|^2\int_0^1\int_0^1\int_{B_{\lambda r}}\left(\mu^2+\left|D\psi(x)\right|^2+\left|D\psi(x+h)\right|^2\right)^\frac{p-2}{2}\cr\cr
	&&\cdot\left|D^2\psi(x+\sigma h-\theta h)\right|^2dxd\sigma d\theta\cr\cr
	&&+2|h|^2\int_{0}^{1}\int_{0}^{1}\int_{B_{\lambda r}} g(x)\left(\mu^2+\left|D\psi(x)\right|^2+\left|D\psi(x+h)\right|^2\right)^\frac{p-1}{2}\cr\cr
	&&\cdot\left|D\psi(x+\sigma h-\theta h)\right|\left|D\eta(x-\theta h)\right|dxd\sigma d\theta\cr\cr
	&&+|h|^2\int_{0}^{1}\int_{0}^{1}\int_{B_{\lambda r}} \left(\mu^2+\left|D\psi(x)\right|^2\right)^\frac{p-2}{2}\left|D^2\psi(x)\right|\cr\cr
	&&\cdot\left|D^2\psi(x+\sigma h-\theta h)\right|dxd\sigma d\theta \cr\cr
	&&+2|h|^2\int_{0}^{1}\int_{0}^{1}\int_{B_{\lambda r}}\left(\mu^2+\left|D\psi(x)\right|^2\right)^\frac{p-2}{2}\left|D^2\psi(x)\right|\cr\cr
	&&\cdot\left|D\eta(x-\theta h)\right|\left|D\psi(x+\sigma h-\theta h)\right|dxd\sigma d\theta.
\end{eqnarray}

By Lemma \ref{lemma6GP} and the properties of $\eta$, the left-hand side of \eqref{full1GG} can be bounded from below as follows

\begin{equation}\label{VpbelowGG}
	\int_{\Omega}\eta^2(x)\left(\mu^2+\left|Du(x)\right|^2+\left|Du(x+h)\right|^2\right)^{\frac{p-2}{2}}\left|\tau_{h}Du(x)\right|^2dx\ge\int_{\Omega}\eta^2(x)\left|\tau_{h}V_p\left(Du(x)\right)\right|^2dx.
\end{equation}

So, by \eqref{VpbelowGG} and \eqref{full1GG}, recalling the properties of $\eta$ and using Lemma \ref{le1}, we get

\begin{eqnarray*}\label{full2GG}
	&&\int_{\Omega}\eta^2(x)\left|\tau_{h}V_p\left(Du(x)\right)\right|^2dx\cr\cr
	&\le& c|h|^2\left(\int_{B_{\lambda r}}\left(\mu^2+\left|Du(x)\right|^2\right)^{\frac{p+2}{2}}dx\right)^\frac{p}{p+2}\cdot\left(\int_{B_{2r}}g^{p+2}(x)dx\right)^\frac{2}{p+2}\cr\cr
	&&+\frac{c|h|^2}{\left(t-\tilde{s}\right)^2}\int_{B_{2r}}\left(\mu^2+\left|Du(x)\right|^2\right)^\frac{p}{2}dx\cr\cr
	&&+c|h|^2\left[\int_{B_{2r}}g^{p+2}(x)dx+\int_{B_{2r}}\left(\mu^2+\left|D\psi(x)\right|^2\right)^\frac{p+2}{2}dx\right]\cr\cr
	&&+\frac{c|h|^2}{t-\tilde{s}}\int_{B_{2r}} g(x)\left(\mu^2+\left|D\psi(x)\right|^2\right)^\frac{p}{2}dx\cr\cr
	&&+c|h|^2\int_{B_{2r}} \left(\mu^2+\left|D\psi(x)\right|^2\right)^\frac{p-2}{2}\left|D^2\psi(x)\right|^2dx\cr\cr
	&&+\frac{c|h|^2}{t-\tilde{s}}\int_{B_{2r}}\left(\mu^2+\left|D\psi(x)\right|^2\right)^\frac{p-1}{2}\left|D^2\psi(x)\right|dx.
\end{eqnarray*}

Now we apply H\"{o}lder's inequality with three exponents $\left(p+2, p+2, \frac{p+2}{p}\right)$ to the integral of the fifth line, Young's inequality with exponents $\left(2, 2\right)$ to the last integral, thus getting

\begin{eqnarray*}\label{full4GG}
	&&\int_{\Omega}\eta^2(x)\left|\tau_{h}V_p\left(Du(x)\right)\right|^2dx\cr\cr
	&\le& c|h|^2\left(\int_{B_{\lambda r}}\left(\mu^2+\left|Du(x)\right|^2\right)^{\frac{p+2}{2}}dx\right)^\frac{p}{p+2}\cdot\left(\int_{B_{2r}}g^{p+2}(x)dx\right)^\frac{2}{p+2}\cr\cr
	&&+\frac{c|h|^2}{\left(t-\tilde{s}\right)^2}\int_{B_{2r}}\left(\mu^2+\left|Du(x)\right|^2\right)^\frac{p}{2}dx\cr\cr
	&&+c|h|^2\left[\int_{B_{2r}}g^{p+2}(x)dx+\int_{B_{2r}}\left(\mu^2+\left|D\psi(x)\right|^2\right)^\frac{p+2}{2}dx\right]\cr\cr
	&&+\frac{c|h|^2}{t-\tilde{s}}\left(\int_{B_{2r}} g^{p+2}(x)dx\right)^\frac{1}{p+2}\cdot\left(\int_{B_{2r}}\left(\mu^2+\left|D\psi(x)\right|^2\right)^\frac{p+2}{2}dx\right)^\frac{p}{p+2}\cr\cr
	&&+\frac{c|h|^2}{t-\tilde{s}}\left[\int_{B_{2r}} \left|DV_p\left(D\psi(x)\right)\right|^2dx+\int_{B_{2r}}\left(\mu^2+\left|D\psi(x)\right|^2\right)^\frac{p}{2}dx\right]
\end{eqnarray*}

for a constant $c=c(n, p, \nu, L, \ell)$, where we also used \eqref{lemma6GPestimate2}. By Young's inequality with exponents $\left(\frac{p+2}{2}, \frac{p+2}{p}\right)$, for some $\varepsilon>0$, we get

\begin{eqnarray*}\label{conclusion1.2GG}
	&&\int_\Omega\eta^2(x)\left|\tau_{h}V_p\left(Du(x)\right)\right|^2dx\cr\cr
	&\le& c|h|^2\left[\varepsilon\int_{B_{\lambda r}}\left(\mu^2+\left|Du(x)\right|^2\right)^{\frac{p+2}{2}}dx+\frac{c}{t-\tilde{s}}\int_{B_{2r}}\left(\mu^2+\left|D\psi(x)\right|^2\right)^{\frac{p+2}{2}}dx\right]\cr\cr
	&&+\frac{c_\varepsilon|h|^2}{t-\tilde{s}}\left[\int_{B_{2r}}g^{p+2}(x)dx+\left(\int_{B_{2r}} g^{p+2}(x)dx\right)^\frac{1}{2}\right]\cr\cr
	&&+\frac{c|h|^2}{t-\tilde{s}}\left[\int_{B_{2r}} \left|DV_p\left(D\psi(x)\right)\right|^2dx+\int_{B_{2r}}\left(\mu^2+\left|D\psi(x)\right|^2\right)^\frac{p}{2}dx\right]\cr\cr
	&&+\frac{c|h|^2}{\left(t-\tilde{s}\right)^2}\left(\int_{B_{2r}}\left(\mu^2+\left|Du(x)\right|^2\right)^\frac{p}{2}dx\right),
\end{eqnarray*}

and since $\eta\equiv1$ on $B_{\tilde{s}}$, we get

\begin{eqnarray*}\label{conclusion1.2GG'}
	&&\int_{B_{\tilde{s}}}\left|\tau_{h}V_p\left(Du(x)\right)\right|^2dx\cr\cr
	&\le& c|h|^2\left[\varepsilon\int_{B_{\lambda r}}\left(\mu^2+\left|Du(x)\right|^2\right)^{\frac{p+2}{2}}dx+\frac{c}{t-\tilde{s}}\int_{B_{2r}}\left(\mu^2+\left|D\psi(x)\right|^2\right)^{\frac{p+2}{2}}dx\right]\cr\cr
	&&+\frac{c_\varepsilon|h|^2}{t-\tilde{s}}\left[\int_{B_{2r}}g^{p+2}(x)dx+\left(\int_{B_{2r}} g^{p+2}(x)dx\right)^\frac{1}{2}\right]\cr\cr
	&&+\frac{c|h|^2}{t-\tilde{s}}\left[\int_{B_{2r}} \left|DV_p\left(D\psi(x)\right)\right|^2dx+\int_{B_{2r}}\left(\mu^2+\left|D\psi(x)\right|^2\right)^\frac{p}{2}dx\right]\cr\cr
	&&+\frac{c|h|^2}{\left(t-\tilde{s}\right)^2}\left(\int_{B_{2r}}\left(\mu^2+\left|Du(x)\right|^2\right)^\frac{p}{2}dx\right).
\end{eqnarray*}

Thanks to Lemma \ref{Giusti8.2}, deduce

\begin{eqnarray}\label{conclusion1.2GG''}
	&&\int_{B_{\tilde{s}}}\left|DV_p\left(Du(x)\right)\right|^2dx\cr\cr
	&\le& c\left[\varepsilon\int_{B_{\lambda r}}\left(\mu^2+\left|Du(x)\right|^2\right)^{\frac{p+2}{2}}dx+\frac{c}{t-\tilde{s}}\int_{B_{2r}}\left(\mu^2+\left|D\psi(x)\right|^2\right)^{\frac{p+2}{2}}dx\right]\cr\cr
	&&+\frac{c_\varepsilon|h|^2}{t-\tilde{s}}\left[\int_{B_{2r}}g^{p+2}(x)dx+\left(\int_{B_{2r}} g^{p+2}(x)dx\right)^\frac{1}{2}\right]\cr\cr
	&&+\frac{c}{t-\tilde{s}}\left[\int_{B_{2r}} \left|DV_p\left(D\psi(x)\right)\right|^2dx+\int_{B_{2r}}\left(\mu^2+\left|D\psi(x)\right|^2\right)^\frac{p}{2}dx\right]\cr\cr
	&&+\frac{c}{\left(t-\tilde{s}\right)^2}\left(\int_{B_{2r}}\left(\mu^2+\left|Du(x)\right|^2\right)^\frac{p}{2}dx\right).
\end{eqnarray}

Now, since $\mu\in[0,1]$, we have

\begin{eqnarray}\label{Dup+2}
	\left(\mu^2+\left|Du\right|^2\right)^{\frac{p+2}{2}}&=&\mu^2\left(\mu^2+\left|Du\right|^2\right)^{\frac{p}{2}}+\left(\mu^2+\left|Du\right|^2\right)^{\frac{p}{2}}\left|Du\right|^2\cr\cr
	&\le&\left(\mu^2+\left|Du\right|^2\right)^{\frac{p}{2}}+\left(\mu^2+\left|Du\right|^2\right)^{\frac{p}{2}}\left|Du\right|^2
\end{eqnarray}	

and, similarly,

\begin{eqnarray}\label{Dpsip+2}
	\left(\mu^2+\left|D\psi\right|^2\right)^{\frac{p+2}{2}}
	&\le&\left(\mu^2+\left|D\psi\right|^2\right)^{\frac{p}{2}}+\left(\mu^2+\left|D\psi\right|^2\right)^{\frac{p}{2}}\left|D\psi\right|^2.
\end{eqnarray}

Since $\tilde{t}<\lambda r<\lambda\tilde{s}<\lambda t<\lambda^2r<4R<1$, if we use \eqref{2.2GP} with $\phi\in C^\infty_{0}\left(B_{\lambda t}\right)$ such that $0\le\phi\le1$, $\phi\equiv1$ on $B_{\lambda \tilde{s}}$ and $\left|D\phi\right|\le\frac{c}{\lambda\left(t-\tilde{s}\right)}$, recalling \eqref{lemma6GPestimate2}, thanks to \eqref{Dup+2} we get

\begin{eqnarray}\label{2.2GPDu}
	&&\int_{B_{\tilde{t}}}\left(\mu^2+\left|Du(x)\right|^2\right)^\frac{p+2}{2}dx\cr\cr
	&\le&c\left\Arrowvert u\right\Arrowvert_{L^\infty\left(B_{4R}\right)}^2\int_{B_{\lambda t}}\left|DV_p\left(Du(x)\right)\right|^2dx\cr\cr
	&&+\frac{c\left\Arrowvert u\right\Arrowvert_{L^\infty\left(B_{4R}\right)}^2}{\lambda^2\left(t-\tilde{s}\right)^2}\int_{B_{4R}}\left(\mu^2+\left|Du(x)\right|^2\right)^\frac{p}{2}dx.
\end{eqnarray}

Arguing in the same way, using \eqref{2.2GP} with $\phi\in C^\infty_{0}\left(B_{2 t}\right)$ such that $0\le\phi\le1$, $\phi\equiv1$ on $B_{2\tilde{s}}$ and $\left|D\phi\right|\le\frac{c}{2\left(t-\tilde{s}\right)}$, thanks to \eqref{Dpsip+2}, since $r<\tilde{s}<t<R<\frac{1}{4}$, we get

\begin{eqnarray}\label{2.2GPDpsi}
	&&\int_{B_{2r}}\left(\mu^2+\left|D\psi(x)\right|^2\right)^\frac{p+2}{2}dx\cr\cr
	&\le&c\left\Arrowvert \psi\right\Arrowvert_{L^\infty\left(B_{4R}\right)}^2\int_{B_{4R}}\left|DV_p\left(D\psi(x)\right)\right|^2dx\cr\cr
	&&+\frac{c\left\Arrowvert \psi\right\Arrowvert_{L^\infty\left(B_{4R}\right)}^2}{4\left(t-\tilde{s}\right)^2}\int_{B_{4R}}\left(\mu^2+\left|D\psi(x)\right|^2\right)^\frac{p}{2}dx.
\end{eqnarray}

Therefore, inserting \eqref{2.2GPDu} and \eqref{2.2GPDpsi} into \eqref{conclusion1.2GG''}, since $1<\lambda<2$ and $t-\tilde{s}<1$, we get

\begin{eqnarray*}
	&&\int_{B_{\tilde{s}}}\left|DV_p\left(Du(x)\right)\right|^2dx\cr\cr
	&\le& c\left\Arrowvert u\right\Arrowvert_{L^\infty\left(B_{4R}\right)}^2\cdot\varepsilon\int_{B_{\lambda t}}\left|DV_p\left(Du(x)\right)\right|^2dx\cr\cr
	&&+\frac{c_\varepsilon\left\Arrowvert u\right\Arrowvert_{L^\infty\left(B_{4R}\right)}^2}{\left(t-\tilde{s}\right)^2}\left(\int_{B_{4R}}\left(\mu^2+\left|Du(x)\right|^2\right)^\frac{p}{2}dx\right)\cr\cr
	&&+\frac{c_\varepsilon\left\Arrowvert \psi\right\Arrowvert_{L^\infty\left(B_{4R}\right)}^2}{\left(t-\tilde{s}\right)^2}\left[\int_{B_{4R}}g^{p+2}(x)dx+\left(\int_{B_{4R}} g^{p+2}(x)dx\right)^\frac{1}{2}\right.\cr\cr
	&&\left.+\int_{B_{4R}} \left|DV_p\left(D\psi(x)\right)\right|^2dx+\int_{B_{4R}}\left(\mu^2+\left|D\psi(x)\right|^2\right)^\frac{p}{2}dx\right],
\end{eqnarray*}
and recalling \eqref{est boundedness}, we have

\begin{eqnarray}\label{pre-iterazione1}
	&&\int_{B_{\tilde{s}}}\left|DV_p\left(Du(x)\right)\right|^2dx\cr\cr
	&\le&\varepsilon\cdot c\left(\left\Arrowvert \psi\right\Arrowvert_{L^\infty\left(B_{8R}\right)}^2+\left\Arrowvert u\right\Arrowvert_{L^{p^*}\left(B_{8R}\right)}^2\right)^{\sigma_1}\int_{B_{\lambda t}}\left|DV_p\left(Du(x)\right)\right|^2dx\cr\cr
	&&+\frac{c_\varepsilon\left(\left\Arrowvert \psi\right\Arrowvert_{L^\infty\left(B_{8R}\right)}^2+\left\Arrowvert u\right\Arrowvert_{L^{p^*}\left(B_{8R}\right)}^2\right)^{\sigma_1}}{\left(t-\tilde{s}\right)^2}\cr\cr
	&&\cdot\left[\int_{B_{4R}}\left(\mu^2+\left|Du(x)\right|^2\right)^\frac{p}{2}dx+\int_{B_{4R}}g^{p+2}(x)dx\right.\cr\cr
	&&\left.+\int_{B_{4R}} \left|DV_p\left(D\psi(x)\right)\right|^2dx+\int_{B_{4R}}\left(\mu^2+\left|D\psi(x)\right|^2\right)^\frac{p}{2}dx\right]^{\sigma_2},
\end{eqnarray}
where $\sigma_1$ and $\sigma_2$ depend on $n$ and $p$.
Now, if we choose $\varepsilon>0$ such that
$$
\varepsilon\cdot c\left(\left\Arrowvert \psi\right\Arrowvert_{L^\infty\left(B_{8R}\right)}^2+\left\Arrowvert u\right\Arrowvert_{L^{p^*}\left(B_{8R}\right)}^2\right)^{\sigma_1}=\frac{1}{2},
$$
\eqref{pre-iterazione1} becomes
\begin{eqnarray}\label{pre-iterazione2}
	&&\int_{B_{\tilde{s}}}\left|DV_p\left(Du(x)\right)\right|^2dx\cr\cr
	&\le&\frac{1}{2}\int_{B_{\lambda t}}\left|DV_p\left(Du(x)\right)\right|^2dx\cr\cr
	&&+\frac{c\left(\left\Arrowvert \psi\right\Arrowvert_{L^\infty\left(B_{8R}\right)}^2+\left\Arrowvert u\right\Arrowvert_{L^{p^*}\left(B_{8R}\right)}^2\right)^{\sigma_1}}{\left(t-\tilde{s}\right)^2}\cr\cr
	&&\cdot\left[\int_{B_{4R}}\left(\mu^2+\left|Du(x)\right|^2\right)^\frac{p}{2}dx+\int_{B_{4R}}g^{p+2}(x)dx\right.\cr\cr
	&&\left.+\int_{B_{4R}} \left|DV_p\left(D\psi(x)\right)\right|^2dx+\int_{B_{4R}}\left(\mu^2+\left|D\psi(x)\right|^2\right)^\frac{p}{2}dx\right]^{\sigma_2},
\end{eqnarray}
and since \eqref{pre-iterazione2} holds for any $\frac{R}{2}\le r<\tilde{s}<t<\lambda r<R$ and for any $\lambda\in(1, 2)$ and the constant $c$ is independent of the radii, we can pass to the limit as $\tilde{s}\to r$ and $t\to\lambda r$, thus getting
\begin{eqnarray*}
	&&\int_{B_{r}}\left|DV_p\left(Du(x)\right)\right|^2dx\cr\cr
	&\le&\frac{1}{2}\int_{B_{\lambda^2 r}}\left|DV_p\left(Du(x)\right)\right|^2dx\cr\cr
	&&+\frac{c\left(\left\Arrowvert \psi\right\Arrowvert_{L^\infty\left(B_{8R}\right)}^2+\left\Arrowvert u\right\Arrowvert_{L^{p^*}\left(B_{8R}\right)}^2\right)^{\sigma_1}}{r^2\left(\lambda-1\right)^2}\cr\cr
	&&\cdot\left[\int_{B_{4R}}\left(\mu^2+\left|Du(x)\right|^2\right)^\frac{p}{2}dx+\int_{B_{4R}}g^{p+2}(x)dx\right.\cr\cr
	&&\left.+\int_{B_{4R}} \left|DV_p\left(D\psi(x)\right)\right|^2dx+\int_{B_{4R}}\left(\mu^2+\left|D\psi(x)\right|^2\right)^\frac{p}{2}dx\right]^{\sigma_2},
\end{eqnarray*}

which also implies

\begin{eqnarray}\label{pre-iterazione3}
	&&\int_{B_{r}}\left|DV_p\left(Du(x)\right)\right|^2dx\cr\cr
	&\le&\frac{1}{2}\int_{B_{\lambda^2 r}}\left|DV_p\left(Du(x)\right)\right|^2dx\cr\cr
	&&+\frac{c\left(\left\Arrowvert \psi\right\Arrowvert_{L^\infty\left(B_{8R}\right)}^2+\left\Arrowvert u\right\Arrowvert_{L^{p^*}\left(B_{8R}\right)}^2\right)^{\sigma_1}}{r^2\left(\lambda^2-1\right)^2}\cr\cr
	&&\cdot\left[\int_{B_{4R}}\left(\mu^2+\left|Du(x)\right|^2\right)^\frac{p}{2}dx+\int_{B_{4R}}g^{p+2}(x)dx\right.\cr\cr
	&&\left.+\int_{B_{4R}} \left|DV_p\left(D\psi(x)\right)\right|^2dx+\int_{B_{4R}}\left(\mu^2+\left|D\psi(x)\right|^2\right)^\frac{p}{2}dx\right]^{\sigma_2}.
\end{eqnarray}	

Now, setting
$$
h(r)=\int_{B_{r}}\left|DV_p\left(Du(x)\right)\right|^2dx,
$$

\begin{eqnarray*}
	A&=&c\left(\left\Arrowvert \psi\right\Arrowvert_{L^\infty\left(B_{8R}\right)}^2+\left\Arrowvert u\right\Arrowvert_{L^{p^*}\left(B_{8R}\right)}^2\right)^{\sigma_1}\cr\cr
	&&\cdot\left[\int_{B_{4R}}\left(\mu^2+\left|Du(x)\right|^2\right)^\frac{p}{2}dx+\int_{B_{4R}}g^{p+2}(x)dx\right.\cr\cr
	&&\left.+\int_{B_{4R}} \left|DV_p\left(D\psi(x)\right)\right|^2dx+\int_{B_{4R}}\left(\mu^2+\left|D\psi(x)\right|^2\right)^\frac{p}{2}dx\right]^{\sigma_2}
\end{eqnarray*}

and
\begin{eqnarray*}
	B&=&0,
\end{eqnarray*}
since \eqref{pre-iterazione3} holds for any $1<\lambda<2$, we can apply the Interation Lemma \ref{iteration} with
$$
\theta=\frac{1}{2}\qquad\mbox{ and }\qquad\gamma=2,
$$
thus getting
\begin{eqnarray*}\label{IterationGG}
	&&\int_{B_{\frac{R}{2}}}\left|DV_p\left(Du(x)\right)\right|^2dx\cr\cr
	&\le&\frac{c\left(\left\Arrowvert \psi\right\Arrowvert_{L^\infty\left(B_{8R}\right)}^2+\left\Arrowvert u\right\Arrowvert_{L^{p^*}\left(B_{8R}\right)}^2\right)^{\sigma_1}}{R^2}\cr\cr
	&&\cdot\left[\int_{B_{4R}}\left(\mu^2+\left|Du(x)\right|^2\right)^\frac{p}{2}dx+\int_{B_{4R}}g^{p+2}(x)dx\right.\cr\cr
	&&\left.+\int_{B_{4R}} \left|DV_p\left(D\psi(x)\right)\right|^2dx+\int_{B_{4R}}\left(\mu^2+\left|D\psi(x)\right|^2\right)^\frac{p}{2}dx\right]^{\sigma_2},
\end{eqnarray*}

that is \eqref{aprioriestimateCLAIMGG}, where $c>0$ depends on $n, p, \nu, L$ and $\ell$, and $\sigma_1, \sigma_2>0$ depend on $n$ and $p$.\\
\medskip

{\bf Step 2: the approximation.}\\
Fix an open set $\Omega'\Subset \Omega$, and for a smooth kernel
$\phi \in C^{\infty}_{0}\left(B_{1}(0)\right)$ with $\phi \geq 0$ and
$\int_{B_{1}(0)}\phi = 1$, and for any $\varepsilon\in\left(0, d\left(\Omega', \partial\Omega\right)\right)$, let us consider the corresponding family of mollifiers $ \Set{\phi_{\eps}}_{\eps}$, and set

$$g_\e=g\ast\phi_\e,$$

$$ \mathcal{K}_{\varepsilon,\psi}\left(\Omega\right)=\Set{w\in u+W^{1,
		p}_0\left(\Omega\right): w\ge\psi \mbox{ a.e. in }\Omega}$$
and
\begin{equation*}\label{A_epsilonGG}
	A_\varepsilon(x,\xi)=\int_{B_1}\phi(\omega)A\left(x+\varepsilon\omega,\xi\right)d\omega
\end{equation*}
on $\Omega'$, for each $\eps\in\left(0, d\left(\Omega',\partial\Omega\right)\right)$. The
assumptions \eqref{obstacleA3}--\eqref{obstacleA2} imply

\begin{equation}\label{A3'GG}
	\left|A_\e(x, \xi)\right|\le
	\ell\left(\mu^2+|\xi|^2\right)^\frac{p-1}{2},
\end{equation}

\begin{equation}\label{A1'GG}
	\langle A_\varepsilon(x,\xi)-A_\varepsilon(x,\eta),\xi-\eta\rangle \ge \nu|\eta-\xi|^2\left(\mu^2+|\xi|^2+|\eta|^2\right)^{\frac{p-2}{2}}.
\end{equation}

\begin{equation}\label{A2'GG}
	\left|A_\varepsilon(x,\xi)-A_\varepsilon(x,\eta)\right|\le L \left|\xi-\eta \right|\left(\mu^2+|\xi|^2+|\eta|^2\right)^{\frac{p-2}{2}}.
\end{equation}

By virtue of assumption \eqref{x-dependenceH} we also have

\begin{equation}\label{A4'GG}
	\left|A_\varepsilon(x,\xi)-A_\varepsilon(y,\xi)\right|\le \left(g_{\e}(x)+g_\e(y)\right)|x-y|\left(\mu^2+\left|\xi\right|^2\right)^{\frac{p-1}{2}}
\end{equation}
for almost every $x,y\in\Omega$ and for all $\xi,\eta \in
\R^{n}$. Let $u$ be a solution of the variational inequality
\eqref{variationalinequality} and let fix a ball $B_{\tilde{R}}\Subset
\Omega'$. Let us denote by $u_\eps \in u+W^{1,p}_0\left(B_{\tilde{R}}\right)$ the solution to the inequality

\begin{equation}\label{variationalinequality2GG}
	\int_{\Omega}\left<A_\e\left(x, Dw(x)\right),
	D\left(\varphi-w\right)(x)\right>dx\ge0\qquad\forall \varphi\in
	\mathcal{K}_{\e,\psi}\left(\Omega\right).
\end{equation}
Thanks to \cite[Theorem 1.1]{Gentile3} we have
$V_p\left(Du_\varepsilon\right)\in
W^{1,2}_\loc\left(B_{\tilde{R}}\right) $ and, since $A_\eps$ satisfies conditions \eqref{A3'GG}--\eqref{A4'GG}, for $\eps$ sufficiently small, we are legitimated to apply estimate \eqref{aprioriestimateCLAIMGG} thus getting

\begin{eqnarray}\label{aprioriestimateCLAIM_eps}
	&&\int_{B_{\frac{r}{2}}}\left|DV_p\left(Du_\varepsilon(x)\right)\right|^2dx\cr\cr
	&\le&\frac{c\left(\left\Arrowvert \psi\right\Arrowvert_{L^\infty\left(B_{8r}\right)}^2+\left\Arrowvert u_\varepsilon\right\Arrowvert_{L^{p^*}\left(B_{8r}\right)}^2\right)^{\sigma_1}}{r^2}\cr\cr
	&&\cdot\left[\int_{B_{4r}}\left(\mu^2+\left|Du_\varepsilon(x)\right|^2\right)^\frac{p}{2}dx+\int_{B_{4r}}g^{p+2}_\varepsilon(x)dx\right.\cr\cr
	&&\left.+\int_{B_{4r}} \left|DV_p\left(D\psi(x)\right)\right|^2dx+\int_{B_{4r}}\left(\mu^2+\left|D\psi(x)\right|^2\right)^\frac{p}{2}dx\right]^{\sigma_2},
\end{eqnarray}
for a constant $c=c(n, p, \nu, L, \ell)$, for any ball $B_{8r}\Subset B_{\tilde{R}}.$\\
Moreover, since and $g\in L^{p+2}_\loc\left(\Omega\right)$, we have

\begin{equation}\label{convkGG}
	g_\eps \to g\qquad \mbox{ strongly in }L^{p+2}\left(B_{\tilde{R}}\right),\mbox{ as }\varepsilon\to0
\end{equation}

and, up to a subsequence, almost everywhere in $B_{\tilde{R}}$.

Since by \eqref{A3'GG}, $\left|A_\varepsilon\left(x,Du\right)\right|\le \ell
\left(\mu^2+\left|Du\right|^2\right)^\frac{p-1}{2}$ and since $A_\varepsilon\left(x,Du\right)$ converges
almost everywhere to $A\left(x,Du\right)$, by the dominated convergence Theorem we have

\begin{equation}\label{convdfGG}
	A_\varepsilon\left(x,Du\right)\to A\left(x,Du\right)\qquad
	\mbox{ strongly in }L^{\frac{p}{p-1}}\left(B_{\tilde{R}}\right),\mbox{ as }\varepsilon\to0.
\end{equation}

Using the ellipticity condition \eqref{A1'GG}, we have

\begin{eqnarray}\label{convforteGG}
	&&\int_{B_{\tilde{R}}}\left(\mu^2+\left|Du(x)\right|^2+\left|Du_\eps(x)\right|^2\right)^{\frac{p-2}{2}}\left|\left(Du_\eps-Du\right)(x)\right|^2dx\cr\cr
	&\le& \int_{B_{\tilde{R}}}\langle A_\eps\left(x,Du_\eps(x)\right)-A_\e\left(x,Du(x)\right),
	\left(Du_\eps-Du\right)(x)\rangle dx\cr\cr &=&\int_{B_{\tilde{R}}}\langle
	A_\eps\left(x,Du_\eps(x)\right), \left(Du_\eps-Du\right)(x)\rangle dx\cr\cr
	&&-\int_{B_{\tilde{R}}}\langle
	A_\e\left(x,Du(x)\right), \left(Du_\eps-Du\right)(x)\rangle dx\cr\cr &=&\int_{B_{\tilde{R}}}\langle
	A_\eps\left(x,Du_\eps(x)\right), \left(Du_\eps-Du\right)(x)\rangle dx\cr\cr
	&&-\int_{B_{\tilde{R}}}
	\langle
	A\left(x,Du(x)\right), \left(Du_\eps-Du\right)(x)\rangle dx \cr\cr
	&&-\int_{B_{\tilde{R}}}\langle A_\eps\left(x,Du(x)\right)-A\left(x,Du(x)\right),
	\left(Du_\eps-Du\right)(x)\rangle dx
\end{eqnarray}

\medskip

\noindent Using $u$ and $u_\e$ as test functions in
\eqref{variationalinequality2GG} and \eqref{variationalinequality}
respectively, we have

\begin{equation}\label{a'}
	\int_{B_{\tilde{R}}}\langle A_\eps\left(x,Du_\eps(x)\right),
	\left(Du_\eps-Du\right)(x)\rangle dx\le 0
\end{equation}

and

\begin{equation}\label{b'}
	-\int_{B_{\tilde{R}}}\langle A\left(x,Du(x)\right), \left(Du_\eps-Du\right)(x)\rangle dx \le 0,
\end{equation}

therefore, thanks to \eqref{a'} and \eqref{b'}, \eqref{convforteGG} implies

\begin{eqnarray}\label{due}
	&&\int_{B_{\tilde{R}}}\left(\mu^2+\left|Du(x)\right|^2+\left|Du_\eps(x)\right|^2\right)^{\frac{p-2}{2}}\left|\left(Du_\eps-Du\right)(x)\right|^2dx\cr\cr
	&\le& -\int_{B_{\tilde{R}}}\langle A_\eps\left(x,Du(x)\right)-A	\left(x,Du(x)\right),
	\left(Du_\eps-Du\right)(x)\rangle\,dx\cr\cr
	&\le&\left(\int_{B_{\tilde{R}}}\left|\left(Du-Du_\eps\right)(x)\right|^pdx\right)^{\frac{1}{p}}\cr\cr
	&&\cdot\left(\int_{B_{\tilde{R}}}\left|
	A\left(x,Du(x)\right)-A_\eps\left(x,Du(x)\right)\right|^{\frac{p}{p-1}}dx\right)^{\frac{p-1}{p}}.
\end{eqnarray}
Now let us observe that, by \eqref{a'}, using H\"older's inequality with exponents $\left(p, \frac{p}{p-1}\right)$ and recalling \eqref{A3'GG}, we get
\begin{eqnarray}\label{boundueps1}
	\int_{B_{R}}\langle A_\eps\left(x,Du_\eps(x)\right), Du_\varepsilon(x)\rangle dx&\le&\int_{B_{\tilde{R}}}\langle A_\eps\left(x,Du_\varepsilon(x)\right), Du(x)\rangle dx\cr\cr
	&\le&\left(\int_{B_{\tilde{R}}}\left|A_\eps\left(x,Du_\varepsilon(x)\right)\right|^{\frac{p}{p-1}}dx\right)^\frac{p-1}{p}\cr\cr
	&&\cdot\left(\int_{B_{\tilde{R}}}\left|Du(x)\right|^pdx\right)^\frac{1}{p}\cr\cr
	&\le&\left(\int_{B_{\tilde{R}}}\left(\mu^2+\left|Du_\varepsilon(x)\right|^2\right)^\frac{p}{2}dx\right)^\frac{p-1}{p}\cr\cr
	&&\cdot\left(\int_{B_{\tilde{R}}}\left|Du(x)\right|^pdx\right)^\frac{1}{p}.
\end{eqnarray}
Moreover, using H\"older's inequality with exponents $\left(\frac{2}{p}, \frac{2}{2-p}\right)$ and thanks to the ellipticity condition \eqref{A1'GG}, we have
\begin{eqnarray}\label{boundueps2}
	\int_{B_{\tilde{R}}}\left|Du_\varepsilon(x)\right|^pdx&\le&	\int_{B_{\tilde{R}}}\left|Du_\varepsilon(x)\right|^p\left(\mu^2+\left|Du_\e(x)\right|^2\right)^{\frac{p\left(p-2\right)}{4}}\cdot\left(\mu^2+\left|Du_\e(x)\right|^2\right)^{\frac{p\left(2-p\right)}{4}}dx\cr\cr
	&\le&\left(\int_{B_{\tilde{R}}}\left|Du_\varepsilon(x)\right|^2\left(\mu^2+\left|Du_\e(x)\right|^2\right)^{\frac{p-2}{2}}dx\right)^\frac{p}{2}\cr\cr
	&&\cdot\left(\int_{B_{\tilde{R}}}\left(\mu^2+\left|Du_\e(x)\right|^2\right)^{\frac{p}{2}}dx\right)^\frac{2-p}{2}\cr\cr
	&\le&\left(\int_{B_{\tilde{R}}}\left<A_\e\left(x, Du_\e(x)\right)-A_\e\left(x, 0\right),Du_\e(x) \right>dx\right)^\frac{p}{2}\cr\cr
	&&\cdot\left(\int_{B_{\tilde{R}}}\left(\mu^2+\left|Du_\e(x)\right|^2\right)^{\frac{p}{2}}dx\right)^\frac{2-p}{2}.
\end{eqnarray}
We can notice that, since the ellipticity condition implies
$$
\left<A_\e\left(x, Du_\e(x)\right)-A_\e\left(x, 0\right),Du_\e(x) \right>\ge0,
$$
and so
$$
\left<A_\e\left(x, Du_\e(x)\right),Du_\e(x) \right>\ge\left<A_\e\left(x, 0\right),Du_\e(x) \right>,
$$
for a.e. $x\in B_{\tilde{R}}$.\\
Hence, if we denote
$$
E_1:=\Set{x\in B_{\tilde{R}}: \left<A_\e\left(x, Du_\e(x)\right),Du_\e(x) \right><0},$$
and
$$
E_2:=\Set{x\in B_{\tilde{R}}: \left<A_\e\left(x, Du_\e(x)\right),Du_\e(x) \right>\ge0},
$$
we have
$$
\left|\left<A_\e\left(x, Du_\e(x)\right),Du_\e(x) \right>\right|\le\left|\left<A_\e\left(x, 0\right),Du_\e(x) \right>\right|
$$
for a.e. $x\in E_1$, and
$$
\left<A_\e\left(x, Du_\e(x)\right)-A_\e\left(x, 0\right),Du_\e(x) \right>\le\left<A_\e\left(x, Du_\e(x)\right),Du_\e(x) \right>+\left|\left<A_\e\left(x, 0\right),Du_\e(x) \right>\right|
$$
for a.e. $x\in E_2$.
Therefore \eqref{boundueps2} implies
\begin{eqnarray}\label{boundueps3}
	\int_{B_{\tilde{R}}}\left|Du_\varepsilon(x)\right|^pdx
	&\le&\left(c\int_{E_1}\left|\left<A_\e\left(x, 0\right),Du_\e(x) \right>\right|dx\right.\cr\cr
	&&\left.+\int_{E_2}\left(\left<A_\e\left(x, Du_\e(x)\right),Du_\e(x) \right>+\left|\left<A_\e\left(x, 0\right),Du_\e(x) \right>\right|\right)dx\right)^\frac{p}{2}\cr\cr
	&&\cdot\left(c\int_{B_{\tilde{R}}}\left(\mu^2+\left|Du_\e(x)\right|^2\right)^{\frac{p}{2}}dx\right)^\frac{2-p}{2}\cr\cr
	&\le&\left(\int_{E_1}\mu^{p-1}\left|Du_\e(x) \right|dx\right.\cr\cr
	&&\left.+\int_{E_2}\left(\left<A_\e\left(x, Du_\e(x)\right),Du_\e(x) \right>+\mu^{p-1}\left|Du_\e(x) \right|\right)dx\right)^\frac{p}{2}\cr\cr
	&&\cdot\left(\int_{B_{\tilde{R}}}\left(\mu^2+\left|Du_\e(x)\right|^2\right)^{\frac{p}{2}}dx\right)^\frac{2-p}{2},
\end{eqnarray}
where, in the last line, we used \eqref{A3'GG}.\\
Using Young's inequality with exponents $\left(\frac{2}{p}, \frac{2}{2-p}\right)$, by \eqref{boundueps3} we deduce

\begin{eqnarray}\label{boundueps4}
	\int_{B_{\tilde{R}}}\left|Du_\varepsilon(x)\right|^pdx
	&\le&c_\sigma\int_{B_{\tilde{R}}}\mu^{p-1}\left|Du_\e(x) \right|dx+c_\sigma\int_{B_{\tilde{R}}}\left<A_\e\left(x, Du_\e(x)\right),Du_\e(x) \right>dx\cr\cr
	&&+\sigma\int_{B_{\tilde{R}}}\left(\mu^2+\left|Du_\e(x)\right|^2\right)^{\frac{p}{2}}dx\cr\cr
	&\le&c_\sigma\int_{B_{\tilde{R}}}\left<A_\e\left(x, Du_\e(x)\right),Du_\e(x) \right>dx\cr\cr
	&&+2\sigma\int_{B_{\tilde{R}}}\left|Du_\e(x)\right|^{p}dx+c_\sigma\left|B_{\tilde{R}}\right|,
\end{eqnarray}
where we also used Young's inequality with exponents $\left(p, \frac{p}{p-1}\right)$ and the fact that $\mu\in[0, 1]$.\\
Now, joining \eqref{boundueps1} with \eqref{boundueps4}, we get
\begin{eqnarray}\label{boundueps5}
	\int_{B_{\tilde{R}}}\left|Du_\varepsilon(x)\right|^pdx
	&\le&c_\sigma\left(\int_{B_{\tilde{R}}}\left(\mu^2+\left|Du_\varepsilon(x)\right|^2\right)^\frac{p}{2}dx\right)^\frac{p-1}{p}\cdot\left(\int_{B_{\tilde{R}}}\left|Du(x)\right|^pdx\right)^\frac{1}{p}\cr\cr
	&&+2\sigma\int_{B_{\tilde{R}}}\left|Du_\e(x)\right|^{p}dx+c_\sigma\left|B_{\tilde{R}}\right|\cr\cr
	&\le&c_\sigma\int_{B_{\tilde{R}}}\left|Du(x)\right|^pdx+3\sigma\int_{B_{\tilde{R}}}\left|Du_\e(x)\right|^{p}dx\cr\cr
	&&+c_\sigma\left|B_{\tilde{R}}\right|,
\end{eqnarray}
where we used Young's inequality with exponents $\left(p, \frac{p}{p-1}\right)$ and the fact that $\mu\in[0, 1]$ again.\\
Choosing $\sigma<\frac{1}{3}$, \eqref{boundueps5} implies
\begin{eqnarray}\label{boundueps}
	\int_{B_{\tilde{R}}}\left|Du_\varepsilon(x)\right|^pdx
	&\le&c\int_{B_{\tilde{R}}}\left|Du(x)\right|^pdx+c\left|B_{\tilde{R}}\right|.
\end{eqnarray}

Let us observe that, using H\"{o}lder's inequality with exponents $\left(\frac{2}{p}, \frac{2}{2-p}\right)$ recalling \eqref{boundueps}, we have

\begin{eqnarray}\label{uno}
	\int_{B_{\tilde{R}}}\left|\left(Du_\varepsilon-Du\right)(x)\right|^pdx&=&\int_{B_{\tilde{R}}}\left(\mu^2+\left|Du(x)\right|^2+\left|Du_\eps(x)\right|^2\right)^{\frac{p(p-2)}{4}}\left|\left(Du_\eps-Du\right)(x)\right|^p\cr\cr
	&&\cdot\left(\mu^2+\left|Du(x)\right|^2+\left|Du_\eps(x)\right|^2\right)^{\frac{p(2-p)}{4}}dx\cr\cr
	&\le&\left(\int_{B_{\tilde{R}}}\left(\mu^2+\left|Du(x)\right|^2+\left|Du_\eps(x)\right|^2\right)^{\frac{p-2}{2}}\left|\left(Du_\eps-Du\right)(x)\right|^2dx\right)^\frac{p}{2}\cr\cr
	&&\cdot\left(\int_{B_{\tilde{R}}}\left(\mu^2+\left|Du(x)\right|^2+\left|Du_\e(x)\right|^2\right)^\frac{p}{2}dx\right)^\frac{2-p}{2}\cr\cr
	&\le&c\left(\int_{B_{\tilde{R}}}\left(\mu^2+\left|Du(x)\right|^2+\left|Du_\eps(x)\right|^2\right)^{\frac{p-2}{2}}\left|\left(Du_\eps-Du\right)(x)\right|^2dx\right)^\frac{p}{2}\cr\cr
	&&\cdot\left(\int_{B_{\tilde{R}}}\left(\mu^2+\left|Du(x)\right|^2\right)^\frac{p}{2}dx+\left|B_{\tilde{R}}\right|\right)^\frac{2-p}{2}\cr\cr
	&\le&c\left[\left(\int_{B_{\tilde{R}}}\left|\left(Du-Du_\eps\right)(x)\right|^pdx\right)^{\frac{1}{p}}\right.\cr\cr
	&&\left.\cdot\left(\int_{B_{\tilde{R}}}\left|
	A\left(x,Du(x)\right)-A_\eps\left(x,Du(x)\right)\right|^{\frac{p}{p-1}}dx\right)^{\frac{p-1}{p}}\right]^\frac{p}{2}\cr\cr
	&&\cdot\left(\int_{B_{\tilde{R}}}\left(\mu^2+\left|Du(x)\right|^2\right)^\frac{p}{2}dx+\left|B_{\tilde{R}}\right|\right)^\frac{2-p}{2}\cr\cr&=&c\left(\int_{B_{\tilde{R}}}\left|\left(Du-Du_\eps\right)(x)\right|^pdx\right)^{\frac{1}{2}}\cr\cr
	&&\cdot\left(\int_{B_{\tilde{R}}}\left|
	A\left(x,Du(x)\right)-A_\eps\left(x,Du(x)\right)\right|^{\frac{p}{p-1}}dx\right)^{\frac{p-1}{2}}\cr\cr
	&&\cdot\left(\int_{B_{\tilde{R}}}\left(\mu^2+\left|Du(x)\right|^2\right)^\frac{p}{2}dx+\left|B_{\tilde{R}}\right|\right)^\frac{2-p}{2},
\end{eqnarray}
where we also used \eqref{due}.\\
By Young's inequality with exponents $\left(2, 2\right)$, \eqref{uno} implies
\begin{eqnarray*}
	\int_{B_{\tilde{R}}}\left|\left(Du_\varepsilon-Du\right)(x)\right|^pdx&\le&\sigma\int_{B_{\tilde{R}}}\left|\left(Du-Du_\eps\right)(x)\right|^pdx\cr\cr
	&&+c_\sigma\left(\int_{B_{\tilde{R}}}\left|
	A\left(x,Du(x)\right)-A_\eps\left(x,Du(x)\right)\right|^{\frac{p}{p-1}}dx\right)^{p-1}\cr\cr
	&&\cdot\left(\int_{B_{\tilde{R}}}\left(\mu^2+\left|Du(x)\right|^2\right)^\frac{p}{2}dx+\left|B_{\tilde{R}}\right|\right)^{2-p},
\end{eqnarray*}
for any $\sigma>0$, and if we choose $\sigma<\frac{1}{2}$, we have

\begin{eqnarray*}\label{tre}
	\int_{B_{\tilde{R}}}\left|\left(Du_\varepsilon-Du\right)(x)\right|^pdx
	&\le&c\left(\int_{B_{\tilde{R}}}\left|
	A\left(x,Du(x)\right)-A_\eps\left(x,Du(x)\right)\right|^{\frac{p}{p-1}}dx\right)^{p-1}\cr\cr
	&&\cdot\left(\int_{B_{\tilde{R}}}\left(\mu^2+\left|Du(x)\right|^2\right)^\frac{p}{2}dx+\left|B_{\tilde{R}}\right|\right)^{2-p}.
\end{eqnarray*}

Hence, by \eqref{convdfGG}, we deduce

$$
Du_\e\to Du\qquad\mbox{ strongly in }L^p_\loc\left(B_{\tilde{R}}\right),
$$

which implies

$$
u_\e\to u\qquad\mbox{ strongly in }L^{p^*}_\loc\left(B_{\tilde{R}}\right)
$$

and, up to a subsequence
$$
u_\e\to u\qquad \mbox{ almost everywhere in }B_{\tilde{R}},
$$

as $\varepsilon\to0$.\\
Moreover, by the continuity of the map $\xi\mapsto DV_p\left(\xi\right)$, we also get

$$
DV_p\left(Du_\varepsilon\right) \to DV_p\left(Du\right) \qquad\mbox{ a.e. in }B_{\tilde{R}},\mbox{ as }\varepsilon\to0.
$$

Therefore, recalling \eqref{convkGG}, if we pass to the limit in \eqref{aprioriestimateCLAIM_eps}, by Fatou's Lemma and a covering argument, we conclude, proving \eqref{estimate1GG}.
\end{proof}
As a consequence of Theorem \ref{thm1GG}, Lemma \ref{differentiabilitylemma} and Remark \ref{rmk3}, we get the following result.
\begin{corollary}\label{D2corollaryGG}
Let $u\in W^{1,p}_{\loc}\left(\Omega\right)$ be a solution to the obstacle problem \eqref{functionalobstacle} under assumptions \eqref{obstacleA3}--\eqref{obstacleA2} and let us assume that there exists a function $g\in L^{p+2}_\loc\left(\Omega\right)$ such that \eqref{x-dependenceH} and \eqref{x-dependence} hold, for $1<p<2$.\\
Then the following implication holds:

\begin{equation*}
	\psi\in L^\infty_{\loc}\left(\Omega\right) \mbox{ and } V_p\left(D\psi\right)\in W^{1,2}_{\loc}\left(\Omega\right)\implies u\in W^{2, p}_{\loc}\left(\Omega\right)\mbox{ and }Du\in L^{p+2}_{\loc}\left(\Omega\right).
\end{equation*}
\end{corollary}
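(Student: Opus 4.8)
The plan is to read the statement off the machinery already assembled, so that no new estimate is needed; the corollary is a genuine corollary.

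First I would invoke Theorem \ref{thm1GG}: under assumptions \eqref{obstacleA3}--\eqref{obstacleA2} together with \eqref{x-dependenceH} and \eqref{x-dependence}, and since by hypothesis $\psi\in L^\infty_{\loc}(\Omega)$ and $V_p(D\psi)\in W^{1,2}_{\loc}(\Omega)$, the solution $u$ satisfies $V_p(Du)\in W^{1,2}_{\loc}(\Omega)$. This is the only substantive input, and everything below is a formal consequence of it.

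Next, since $u\in W^{1,p}_{\loc}(\Omega)$ with $1<p<2$ and $V_p(Du)\in W^{1,2}_{\loc}(\Omega)$, Lemma \ref{differentiabilitylemma} (applied with $N=1$) yields $u\in W^{2,p}_{\loc}(\Omega)$, which is the first half of the conclusion. For the second half I would use Remark \ref{rmk3}: its hypotheses require $v\in W^{1,p}_{\loc}(\Omega)$, $V_p(Dv)\in W^{1,2}_{\loc}(\Omega)$ and $v\in L^\infty_{\loc}(\Omega)$, and then estimate \eqref{2.2GP} gives $Dv\in L^{p+2}_{\loc}(\Omega)$. Taking $v=u$, the first two conditions are already in hand from the previous step; the third is exactly the content of Theorem \ref{boundedness}, which under the standing assumption $\psi\in L^\infty_{\loc}(\Omega)$ forces $u\in L^\infty_{\loc}(\Omega)$. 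Hence $Du\in L^{p+2}_{\loc}(\Omega)$, completing the proof.

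There is essentially no obstacle here; the only point requiring a moment's care is to notice that the local boundedness of $u$ needed to feed Remark \ref{rmk3} is not assumed directly but must be extracted from the boundedness of the obstacle via Theorem \ref{boundedness}. If one wished, the whole argument could instead be made quantitative by combining \eqref{estimate1GG} with \eqref{differentiabilityestimate} and \eqref{2.2GP} to produce explicit local bounds for $\|D^2u\|_{L^p}$ and $\|Du\|_{L^{p+2}}$ in terms of the data on the right-hand side of \eqref{estimate1GG}, but this refinement is not required for the stated qualitative claim.
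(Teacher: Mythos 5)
Your proposal is correct and follows exactly the route the paper intends: Theorem \ref{thm1GG} gives $V_p(Du)\in W^{1,2}_{\loc}(\Omega)$, Lemma \ref{differentiabilitylemma} then yields $u\in W^{2,p}_{\loc}(\Omega)$, and Remark \ref{rmk3} combined with the local boundedness of $u$ from Theorem \ref{boundedness} gives $Du\in L^{p+2}_{\loc}(\Omega)$. The paper states the corollary without written proof, citing precisely these ingredients, and your write-up correctly supplies the one non-obvious detail, namely that $u\in L^{\infty}_{\loc}(\Omega)$ must be obtained from the boundedness of the obstacle.
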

\bigskip
\bigskip
Acknowledgements: The authors have partially been supported by the Gruppo Nazionale per l'Analisi Matematica, la Probabilit?a e le loro Applicazioni (GNAMPA) of the Istituto Nazionale diAlta Matematica (INdAM)

\printbibliography
\bigskip
\bigskip
\noindent {\bf A. Gentile}\\
Universit\`{a} degli Studi di Napoli ``Federico II'' \\
Dipartimento di Mat.~e Appl. ``R.~Caccioppoli'',\\
Via Cintia, 80126 Napoli, Italy

\noindent {\em E-mail address}: andrea.gentile@unina.it

\bigskip
\bigskip

\noindent {\bf R. Giova}\\
\noindent Universit\`{a} degli Studi di Napoli ``Parthenope'' \\
Palazzo Pacanowsky - Via Generale Parisi, 13 \\
80132 Napoli, Italy

\noindent {\em E-mail address}: raffaella.giova@uniparthenope.it
\end{document}